\def\NAT@def@citea{\def\@citea{\NAT@separator}}
\theoremstyle{plain}
\theoremstyle{definition}
\theoremstyle{remark}
\newtheorem{remark}{Remark}
\newtheorem{theo}{Theorem}
\newtheorem{lemm}{Lemma}
\newtheorem{rem}{Remark}
\begin{document}


\title{Existence and uniqueness of solutions for Leontief's Input-Output Model, graph theory and sensitivity analysis}


\author{
\name{Jos\'e Carlos Bellido\textsuperscript{a}\textsuperscript{b} and Luis Felipe Prieto-Mart\'inez\textsuperscript{c}\thanks{CONTACT L.~F. Prieto-Mart\'inez. Email: luisfelipe.prieto@upm.es}}
\affil{\textsuperscript{a}Departamento de Matem\'aticas, ETSI Industriales, Universidad de Castilla-La Mancha, Campus Universitario S/N, E13071, Ciudad Real, Spain; \textsuperscript{b} INEI, Universidad de Castilla-La Mancha, Campus Universitario S/N, E13071, Ciudad Real, Spain; \textsuperscript{c}Departamento de Matem\'atica Aplicada, Universidad Polit\'ecnica de Madrid, Juan de Herrera 4, Madrid 28040, Madrid, Spain}
}



\maketitle

\begin{abstract} We provide a complete study of existence and uniqueness (uniqueness up to multiples in the case $\bm d=\bm 0$) of non-negative and non-trivial solutions $\bm x$ for the linear system $(\bm I-\bm A)\bm x=\bm d$ with $\bm A\geq \bm 0,\bm d\geq 0$ (which, in particular, applies to Leontief's Input Output Model). This study is done in terms of the block triangular form of the matrix $\bm A$ and is related to a directed graph associated to $\bm A$. In particular, this study of existence and uniqueness up to multiples of solutions provides a framework that allows us to rigorously perform a sensitivity analysis of the normalized solutions of the linear system $(\bm I-\bm A)\bm x=\bm d$.
\end{abstract}

\begin{keywords} Leontief's Input-Output Model; Positive Eigenvectors; Non-negative matrices; Sensitivity Analysis; Block triangular form


\end{keywords}

\begin{amscode} 91B66; 15A42; 90C31\end{amscode}

\textbf{Word count:} 4839 (text) + 68 (headers) = 4907 (total).



\section{Introduction}

In this paper, for a vector or matrix $\bm C$, we use the notation $\bm C\geq \bm 0$ or we say that $\bm C$ is non-negative (resp. $\bm C>\bm 0$ or we say that $\bm C$ is positive) to express that every entry in $\bm C$ is greater or equal than $0$ (resp. positive).

\subsection*{Leontief's Input-Output Model}

In classical economics, the interrelations between industries with respect to the production are crucial to understand an economic system. W. Leontief made a great contribution to this field by developing the so called Input-Output analysis. He also developed a model for describing these situations (see Chapter 10 in the book \cite{AEP} for basic information concerning this topic or the book by Leontief \cite{L}).


\medskip

\noindent \textbf{Leontief's Input Output Model} \emph{Suppose that we have an economy with a set of $n$ sectors $\mathcal S=\{S_1,\ldots, S_n\}$. Each sector receives an input and uses it to produce $x_i$ units of a single homogeneous good in a fixed period of time.  For each $i,j=1,\ldots,n$, assume that the sector $S_j$, in order to produce 1 unit, must use $a_{ij}$ units from sector $S_i$. Furthermore, assume that each sector $S_i$ sells part of its output to other sectors  and the rest, denoted by $d_i$, to consumers. The problem is to determine the non-negative and non-trivial vectors $\bm x=[x_1,\ldots,x_n]^T$ such that, for $\bm A=[a_{ij}]_{1\leq i,j\leq n}$, $\bm d=[d_1,\ldots,d_n]^T$, we reach an equilibrium}
\begin{equation} \label{eq.main1}\bm (\bm I-\bm A)\bm x=\bm d. \end{equation}

The case in which there is no external demand ($\bm d=\bm 0$) is called \textbf{closed case} and the case in which there is some external demand ($\bm d\geq 0$, $\bm d\neq \bm 0$) is called \textbf{open case}.


We say  that a solution $\bm x$ of System \eqref{eq.main1} is \textbf{economically meaningful} if all of its entries are positive, that is, $\bm x>\bm 0$.


\subsection*{Existence and Uniqueness}

The problem of studying existence and uniqueness of solutions of System \eqref{eq.main1} has already been considered. But the literature usually focus in the case of economically meaningful solutions (in the economical context if, for some $i=1,\ldots, n$ the good $i$ is not necessary for production, that is $x_i=0$, the Sector $S_i$ is removed) in the case of $\bm A$ being irreducible. We refer the reader to Section 2 in \cite{AEP}, Chapters 1 and 2 in \cite{S} and to the survey \cite{G} for more information\footnote{This paper \cite{G} is, up to our knowledge, not published in any scientific journal, but we would like to point out that in the authors' opinion it is a very remarkable, complete, interesting and inspiring work.}.


In our opinion,  it would also be interesting to perform a complete study of existence and uniqueness for the case in which the solutions are only requested to be non-negative and non-trivial and the matrix may fail to be irreducible (see Section 2), since Leontief's Input-Output Model describes a wide range of processes for which these conditions may be unrealistic. For instance, we can take a look at the Input-Output tables corresponding to the annual Spanish national accounts of the year 2016 (see \cite{INE}) to see that the matrix of technical coefficients $\bm A$ is not irreducible and the system $(\bm I-\bm A)\bm x=\bm d$ for $\bm d$ being the vector containing the outputs at basic prices fails to have an economically meaningful solution (this will be explained in more detail in the last example appearing in this paper).

Some attempts to clarify these questions have already appeared in the literature but, as far as we know, only partial results have been achieved in this direction. For instance, for the closed model, in \cite{Z} the author shows necessary and sufficient conditions
for a non-negative matrix $\bm A$ to have a unique positive (right) eigenvector $\bm x$ using spectral theory, and the
uniqueness and the reducible case are emphasized. Moreover, the questions that we intend to answer have already been discussed for the more complicated dynamic input-output model (we refer again to \cite{Z}) or in the studies of economies with environmental protection, where some of the entries in the vector $\bm d$ may fail to be non-negative (see \cite{Lee, LB}).




\subsection*{Relation to Graph Theory}

The study of these existence and uniqueness questions is done in terms of the block triangular form of the matrix $\bm A$ which is, in turn, closely related to some connectivity properties of the weighted digraph $G_{\bm A}$ with adjacency matrix $\bm A$.


These weighted graphs, as described in the following sections, serve to express the ideas of the block triangular form in other terminology. But also it is possible to leverage the information of the combinatorial structure of the underlying graph to draw some conclusions concerning existence and uniqueness. For instance, the study of existence and uniqueness of solutions presented here can be done in terms of properties of connection between the vertices and the matrices corresponding to the closures of the corresponding graphs. See also Remarks \ref{rem.1} and \ref{rem.2} for the explanation of some simple situations.


\subsection*{Sensitivity analysis}

 Roughly speaking, sensitivity analysis refers to the family of techniques utilized for evaluating the impact of uncertainties of one or more input variables of a mathematical model on the output variables. It is of great relevance in many applied mathematical contexts (see \cite{C, K, R, RBD, ST, YDBR} and the references therein).

The sensitivity analysis of this model has been widely developed in the literature (see for instance \cite{A, BS, YZ, E,  TR}), and its importance will be discussed in Section \ref{app}.

The characterization of the matrices $\bm A$ for which $(\bm I-\bm A)\bm x=\bm d$ (resp. $(\bm I- \bm A)\bm x=\bm 0$) has a unique solution (resp. a unique solution up to multiples) would provide a rigorous framework to perform sensitivity analysis. This characterization complements the discussion made in the recent preprint \cite{BP} in order to obtain a rigorous mathematical framework to perform sensitivity analysis (we refer the reader to \cite{BP} for a discussion concerning the importance of ensuring uniqueness when performing sensitivity analysis in this type of problems).

\subsection*{What do we do in this article?}

In Section \ref{section.pibt} we introduce the terminology required for the rest of this article and in Section \ref{section.lemmas} we prove some very basic lemmas (based on well known results appearing in the literature). We would like to highlight Lemma \ref{thelemma} which explains, in our approach, the relation between the matrix $\bm A$ which is called \emph{matrix of technical coefficients} in the bibliography and the so called \emph{transaction matrix}.

In Section \ref{closed} we characterize the block triangular form of the non-negative square matrices $\bm A$ for which non-trivial and non-negative solutions $\bm x$ of System $(\bm I-\bm A)\bm x=\bm 0$ exist (Theorem \ref{closed.existence}) and are unique up to multiples (Theorem \ref{closed.uniqueness}). Similarly, in Section \ref{open} we characterize (the block triangular form of the) non-negative square matrices $\bm A$ for which non-trivial and non-negative solutions $\bm x$ of System $(\bm I-\bm A)\bm x=\bm d$, for given $\bm d\neq \bm 0$,  exist Theorems \ref{open.existence} and \ref{open.uniqueness}. In both cases we relate this characterization with connectivity properties of the underlying weighted digraph $G_{\bm A}$.

The proofs of the main results appearing in Sections \ref{closed} and \ref{open} are derived from the basic lemmas in Section \ref{section.lemmas}. For us, the most important thing is to provide a complete study of these existence and uniqueness questions (related to different cases, such as reducible or irreducible matrices and positive or non-negative solutions) from the most simple and unified possible approach. We have not found such a complete and simple study in the literature, although some related results appear in \cite{AEP, G}. Moreover, Sections \ref{section.lemmas}, \ref{closed} and \ref{open} are self-contained, except for the Perron-Frobenius Theorem and the characterization of productive matrices cited in Section \ref{section.lemmas} (which are already classical results). {Also, such results are crucial in order to give a rigorous sensitivity analysis as we will see.}

Finally, in Section \ref{app} we discuss the importance of the sensitivity analysis concerning this problem and we provide some references related to this. We show some concrete examples of systems of the type $(\bm I-\bm A)\bm x=\bm d$ where the entries in $\bm A$ depend on parameters (or are parameter themselves). We prove rigorously that the solution, in each case, is unique (up to multiples in the case $\bm d=\bm 0$). After this, we develop an a direct method for computing the sensitivities, indeed the elasticities, a meaningful magnitude from an Economy viewpoint  proportional to the sentivities, of the solution $\bm x$ with respect to the entries of $\bf A$ and $\bm d$. Our method here follows the ideas in \cite{BP}, a general study by the authors on sensitivity of solutions of linear systems with respect to system coefficients, covering the general situation of manifolds of solutions of any dimension (here, we are only interested in solutions which are unique, in the open case, or unique up to multiples, in the closed case, therefore manifolds of solutions of dimension less or equal one). We would like to emphasize that the method we provide for sensitivity computation is accurate and computationally efficient in contrast with the majority of the methods implemented in the existing references on the topic.


\section{Some basic concepts} \label{section.pibt}

\subsection*{Productive matrices}

We say that a square matrix $\bm A$ is a \textbf{productive matrix} if it is non-negative and there exists some positive vector $\bm x$ such that $(\bm I-\bm A)\bm x$ is a {positive} vector. 

 There is a well-known characterization of productive matrices. For a non-negative matrix $\bm A$  the following statements are equivalent: (A) $\bm A$ is productive, (B) $(\bm I-\bm A)$ is invertible and its inverse is non-negative and (C) $\bm A$ satisfies the so called \textbf{Hawkins-Simon condition}, that is, all leading principal minors of the matrix $\bm I-\bm A$ are positive  (see Theorem 10.6 in \cite{AEP}).

\subsection*{Reducible matrices and block triangular form}

We say that a square matrix $\bm A$ is \textbf{reducible} if there exists a permutation matrix $\bm P$ such that 
\begin{equation}\label{eq.reducible} \bm P\bm A\bm P^{-1}=\left[\begin{array}{c | c} \bm F & \bm G\\ \hline \bm 0 & \bm H \end{array}\right],\end{equation}

\noindent where $\bm F,\bm H$ are square matrices (of size greater than zero). A matrix is \textbf{irreducible} if it is not reducible. There exist some known characterizations of irreducible matrices (see Lemmas 10.7, 10.8 in \cite{AEP}).

A mathematical fact is that any square matrix $\bm A$  admits a \textbf{block triangular form} (see \cite{DER}), that is, there exists a permutation matrix $\bm P$ such that
\begin{equation} \label{eq.tf} \bm P\bm A\bm P^{-1}=\left[\begin{array}{c|c|c} \bm A_{11} &\hdots & \bm A_{1k}\\ \hline \bm 0& \ddots & \vdots \\ \hline \bm 0& \bm 0&\bm A_{kk}\end{array}\right]  \end{equation}

\noindent  and the blocks $\bm A_{11},\ldots, \bm A_{kk}$ are irreducible (the case $k=1$ corresponds to $\bm A$ being irreducible). 
{This block triangular form is sometimes called canonical form (see \cite{S}), \emph{Frobenius canonical form} or \emph{irreducible canonical form} and it is \emph{essentially unique} (in some cases the ordering of the blocks is not unique and the same happens for the ordering of the rows/columns within each block, see again \cite{S}).  For a given square matrix $\bm A$, the computational problem of finding the block triangular form have also been studied. We refer the reader to \cite{DER} and to the references therein for more information.}

\subsection*{Easiest case of Equation \eqref{eq.main1}}

The most studied case in the literature is the one appearing in the following theorem.

\begin{theo}[Theorem 10.9 in \cite{AEP}] For $\bm A\geq \bm 0$, the following three conditions are equivalent:

\begin{itemize}

\item for each non-zero $\bm d\geq \bm 0$, System \eqref{eq.main1} has a positive solution $\bm x>\bm 0$;

\item the matrix $(\bm I-\bm A)^{-1}$ is positive;

\item the matrix $\bm A$ is productive and irreducible.

\end{itemize}

\end{theo}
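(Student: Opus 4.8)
The plan is to prove the three conditions equivalent by showing that the second one (positivity of $(\bm I-\bm A)^{-1}$) is equivalent to each of the other two; this splits the work into two biconditionals. The implication from the second to the first is immediate: if $(\bm I-\bm A)^{-1}>\bm 0$, then $\bm I-\bm A$ is invertible, the system is uniquely solvable, and its solution $\bm x=(\bm I-\bm A)^{-1}\bm d$ is obtained by weighting the (strictly positive) columns of $(\bm I-\bm A)^{-1}$ by the entries of the non-negative, non-zero vector $\bm d$, so $\bm x>\bm 0$. For the converse (first $\Rightarrow$ second), I would first apply the first condition with some strictly positive $\bm d>\bm 0$ to produce $\bm x>\bm 0$ with $(\bm I-\bm A)\bm x>\bm 0$, which is exactly the definition of $\bm A$ being productive; the cited characterization of productive matrices then guarantees that $(\bm I-\bm A)$ is invertible with $(\bm I-\bm A)^{-1}\geq\bm 0$. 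Testing the first condition against each standard basis vector $\bm d=\bm e_j$ shows that every column of $(\bm I-\bm A)^{-1}$ is strictly positive, whence $(\bm I-\bm A)^{-1}>\bm 0$.

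For the equivalence with the third condition, the implication second $\Rightarrow$ third is the more routine half. Positivity of $(\bm I-\bm A)^{-1}$ forces it in particular to be non-negative and invertible, so $\bm A$ is productive by the same characterization. To see that $\bm A$ must also be irreducible, I argue by contradiction: if $\bm A$ were reducible, then up to a permutation $\bm P$ the matrix $\bm I-\bm A$ is block upper triangular with a zero lower-left block, and the inverse of such a block triangular matrix is again block upper triangular with a zero lower-left block, contradicting $(\bm I-\bm A)^{-1}>\bm 0$.

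The implication third $\Rightarrow$ second is where the real work lies and where irreducibility is genuinely used. Productivity alone yields only $\bm B:=(\bm I-\bm A)^{-1}\geq\bm 0$, and I must upgrade this to strict positivity. The key identity is $\bm B=\bm I+\bm A\bm B$, obtained from $(\bm I-\bm A)\bm B=\bm I$. Since $\bm A,\bm B\geq\bm 0$, this first gives $\bm B\geq\bm I$, and feeding the bound back into the identity repeatedly yields, by induction, $\bm B\geq\sum_{k=0}^{m}\bm A^{k}$ for every $m\geq 0$. Taking $m=n-1$ and invoking the combinatorial characterization of irreducibility (Lemmas 10.7 and 10.8 in \cite{AEP}), according to which $(\bm I+\bm A)^{n-1}>\bm 0$, equivalently $\sum_{k=0}^{n-1}\bm A^{k}>\bm 0$, gives $\bm B>\bm 0$. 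The main obstacle is precisely this last step: the passage from a non-negative to a strictly positive inverse cannot follow from productivity alone, since reducible productive matrices give counterexamples, and it must rest on the graph-theoretic fact that in an irreducible matrix every index is reachable from every other, so that each off-diagonal entry of some power $\bm A^{k}$ is positive.
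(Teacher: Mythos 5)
Your proof is correct, but note that the paper does not prove this statement at all: it quotes it verbatim as Theorem 10.9 of \cite{AEP}, purely as background for the cases (reducible, non-productive, merely non-negative solutions) that the paper actually studies, so there is no internal proof to compare against. Your argument is a sound, self-contained derivation resting on exactly the two classical ingredients that the paper itself invokes elsewhere: the characterization of productive matrices ($\bm A$ productive $\Leftrightarrow$ $(\bm I-\bm A)$ invertible with non-negative inverse) and the combinatorial characterization of irreducibility, $(\bm I+\bm A)^{n-1}>\bm 0$ (Lemmas 10.7, 10.8 of \cite{AEP}). All four implications check out: testing the first condition against some $\bm d>\bm 0$ yields productivity, and testing it against $\bm d=\bm e_j$ (once invertibility is known, so the solution is unique) shows each column of $(\bm I-\bm A)^{-1}$ is positive; the block-triangular argument for the second condition implying irreducibility is valid, since invertibility of a block upper triangular matrix forces its diagonal blocks to be invertible and its inverse to retain the zero lower-left block, which permutation conjugation cannot hide; and the Neumann-type bound $(\bm I-\bm A)^{-1}=\bm I+\bm A(\bm I-\bm A)^{-1}\geq\sum_{k=0}^{m}\bm A^{k}$, established by induction, combines with $\sum_{k=0}^{n-1}\bm A^{k}>\bm 0$ (equivalent to $(\bm I+\bm A)^{n-1}>\bm 0$ because the binomial coefficients are positive and all matrices involved are non-negative) to upgrade the non-negative inverse to a strictly positive one. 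Your closing observation is also the right diagnosis: that last step is precisely where irreducibility is indispensable, since reducible productive matrices have non-negative inverses with forced zero entries.
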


So, in this paper, we are interested in extending this result to cover the rest of the cases described in the introduction (reducible or non-productive matrix $\bm A$ and non-negative solutions $\bm x$).


\subsection*{Weighted digraphs} 

Let $\bm A$ be a  non-negative $n\times n$ matrix. We define $G_{\bm A}$ to be the weighted digraph with vertices $S_1,\ldots, S_n$ which adjacency matrix is ${\bm A}$. This underlying graph has already being used in the study of multisectorial or multifactorial models (see the very complete survey \cite{G} or the article \cite{O} and the references therein). 


We need to introduce the following concepts:

\begin{itemize}

\item $S_i$ is a \textbf{direct predecessor} of $S_j$ if the edge from $S_i$ to $S_j$ is in $G_{\bm A}$. In this case, we also say that $S_j$ is a \textbf{direct successor} of $S_i$.

\item We say that a given vertex $S_i$ is a \textbf{sink} if has no outgoing edges (in other words, if the out-degree of $S_i$ is 0). On the other hand, we say that $S_i$ is a \textbf{source} if there is no incoming edges (in other words, if the in-degree of $S_i$ is 0).

\item A \textbf{closure} in $G_{\bm A}$ is a subset of vertices that have no direct successors. Note that, in the context of Leontief's Input-Output Model, if a set of sectors  $W$ form a closure this means that these sectors ``parasitize'' the rest of the sectors in the economy, in the sense that the production of the sectors in $W$ is not required by the rest of sectors in the economy but the sectors in $W$  may need part of the production of the others.

\item We say that $S_j$ is \textbf{reachable} from $S_i$ or that $S_i,S_j$ \textbf{communicate} if there is a sequence $S_{0},\ldots, S_{r}$ such that $S_{0}=S_i$, $S_{r}=S_j$ and, for every $m=1,\ldots, r$, $S_{m}$ is a direct successor of $S_{m-1}$. Two vertices are \textbf{strongly equivalent} if they are mutually reachable from each other.

\item \textbf{Strongly connected components} are the equivalence classes of vertices with respect to the strongly equivalence relation.

\item The \textbf{index} of $\bm G_A$ is the spectral radius of $\bm A$ (see \cite{CRS}).







\end{itemize}


It is easy to derive the following result. 

\begin{theo}[after Theorem 7 in \cite{G}] In the notation described above, the directed graph $G_{\bm A}$ is strongly connected if and only if $\bm A$ is irreducible.

\end{theo}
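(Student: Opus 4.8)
The plan is to establish the logically equivalent contrapositive statement: $G_{\bm A}$ fails to be strongly connected if and only if $\bm A$ is reducible. The bridge between the graph-theoretic and the linear-algebraic language is the elementary observation that, since $\bm A\geq\bm 0$, the digraph $G_{\bm A}$ contains the edge $S_i\to S_j$ precisely when $a_{ij}>0$; thus ``existence of an edge'' is literally the nonzero pattern of $\bm A$. I would also record at the outset that conjugating $\bm A$ by a permutation matrix $\bm P$ merely relabels the vertices of $G_{\bm A}$, so the resulting digraph is isomorphic to $G_{\bm A}$ and, in particular, strong connectivity is preserved. This lets me reorder indices freely in both directions of the argument.

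First I would show that reducibility forces a failure of strong connectivity. Assume $\bm A$ is reducible. Replacing $\bm A$ by $\bm P\bm A\bm P^{-1}$ (which only relabels vertices and hence preserves strong connectivity), we may assume $\bm A$ is already in the form \eqref{eq.reducible}, with $\bm F$ of size $r\times r$ and $\bm H$ of size $(n-r)\times(n-r)$, $1\le r\le n-1$. Writing $I=\{1,\dots,r\}$ and $J=\{r+1,\dots,n\}$, the vanishing lower-left block says $a_{ij}=0$ whenever $i\in J$ and $j\in I$, i.e. there is no edge from $J$ to $I$. Consequently any path issuing from a vertex of $J$ never leaves $J$, so no vertex of $I$ is reachable from any vertex of $J$. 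Since $I$ and $J$ are both nonempty, choosing $S_i$ with $i\in I$ and $S_j$ with $j\in J$ exhibits a pair that does not communicate, and $G_{\bm A}$ is not strongly connected.

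For the converse I would argue by producing a closure. Suppose $G_{\bm A}$ is not strongly connected, so there exist vertices $S_a,S_b$ with $S_b$ not reachable from $S_a$. Let $R$ be the set consisting of $S_a$ together with all vertices reachable from $S_a$. Then $R$ is closed under direct successors: if $S_i\in R$ and $S_i\to S_j$, then $S_j$ is reachable from $S_a$, hence $S_j\in R$. Thus $R$ is a closure in the sense of Section~\ref{section.pibt}; moreover it is nonempty (it contains $S_a$) and proper (it omits $S_b$). Setting $J=R$ and letting $I$ be its complement, the closure property yields $a_{ij}=0$ for all $i\in J$ and $j\in I$. Reordering the indices by a permutation so that the vertices of $I$ precede those of $J$ then puts $\bm A$ into the block form \eqref{eq.reducible} with both diagonal blocks of positive size, since $1\le|I|,|J|\le n-1$. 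Hence $\bm A$ is reducible.

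The argument is short, and the only point requiring genuine care — what I would call the main (if modest) obstacle — is the bookkeeping of edge orientation against the position of the zero block: one must make sure that ``closed under successors'' corresponds to the lower-left zero block (rather than the upper-right one) and that the reachability set $R$ lands on the intended side of the partition. Once the dictionary ``edge $\leftrightarrow$ positive entry'' and ``nonempty proper successor-closed set $\leftrightarrow$ zero lower-left block'' is fixed, both implications follow immediately, with the degenerate case $n=1$ handled automatically since a single vertex is vacuously strongly connected and a $1\times1$ matrix admits no such nontrivial splitting.
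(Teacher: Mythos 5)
Your proof is correct. The paper itself offers no proof of this statement (it is cited as ``after Theorem 7 in \cite{G}'' and labelled as easy to derive), and your argument --- reducibility giving a vertex set with no outgoing edges in one direction, and conversely the reachability set of a vertex witnessing non-strong-connectivity giving a successor-closed proper subset that produces the zero lower-left block --- is exactly the standard derivation the authors are implicitly invoking, with the edge-orientation bookkeeping handled correctly.
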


\noindent Moreover, by definition, if $\bm A$ is reducible and the matrix in Equation \eqref{eq.tf} is a triangular form of $\bm A$, then $\bm A$ has $k$ strongly connected components and the vertices corresponding (via the permutation) to the same diagonal block $\bm A_{ii}$ lie in the same strongly connected component.

By restricting the study of Leontief's Input-Output Model to irreducible matrices we are leaving apart the study of economical systems whose graph is not strongly connected and this might be an interesting case as well.

Surprisingly, the conditions for existence and uniqueness of solutions for the systems of the type $(\bm I-\bm A)\bm x=\bm d$ that we are considering can be stated in terms of direct successors and predecessors of the vertices, not in terms of reachability (as we will see soon).

\subsection*{Common Notation}

For the sake of brevity, let us define the following, which is common for Theorems \ref{closed.existence}, \ref{closed.uniqueness}, \ref{open.existence} and \ref{open.uniqueness}:


\noindent \textbf{Notation ($\star$).} \emph{Let $\bm A$ be a non-negative square matrix. Let}
$$\bm P\bm A\bm P^{-1}=\left[\begin{array}{c|c|c} \bm A_{11} &\hdots & \bm A_{1k}\\ \hline \bm 0& \ddots & \vdots \\ \hline \bm 0& \bm 0&\bm A_{kk}\end{array}\right]$$ 

\noindent \emph{be any of the block triangular forms of $\bm A$. Recall that all of the block triangular forms must have the same number of diagonal blocks $k$.}

\noindent \emph{Let $G_1,\ldots, G_k$ be the subgraphs of $G_{\bm A}$ consisting in the vertices of each of the strongly connected components of $G_{\bm A}$ and the oriented and weighted edges between them (recall that the number of strongly connected components of $G_{\bm A}$ coincides with the number of diagonal blocks in the block triangular form). We define a partition of the set of sectors $\mathcal S$ into the disjoint sets $\mathcal S_{<1},\mathcal S_{1},\mathcal S_{> 1}$, corresponding to the vertices belonging to strongly connected components whose associated graph $G_{i}$ has an adjacency matrix with spectral radius $<1$, $1$ or $>1$, respectively (each strongly connected components must be contained in only one of the sets $\mathcal S_{<1}, \mathcal S_1, \mathcal S_{>1}$).}


\section{Some preliminary lemmas} \label{section.lemmas}


Most of the effort for understanding the existence and uniqueness of solutions for the problem $(\bm I-\bm A)\bm x=\bm d$ for $\bm A,\bm d\geq 0$ consists in studying the existence of economically meaningful solutions $\bm x$ for the case in which $\bm A$ is irreducible.

In this section, we present two lemmas. The ideas contained in them are not original and already appear in the literature (compare to the similar results appearing in \cite{AEP, G}), but the precise statements, as it will be used later in this article, do not. So, for the shake of completeness we include the proof of both results in this section.

\begin{itemize}

\item Concerning the first one, it is common to assume the existence of the so called \emph{transaction matrix} (the definition may be found below) and of economically meaningful solutions (see \cite{L}, for instance). In this case, the \emph{matrix of technical coefficients} (see the following lemma) can be obtained from the transaction matrix. But our approach starts directly from the matrix of technical coefficients and does not make these assumptions. So, this situation makes this explicit statement necessary (in the rest of our investigation, it does not appear in the literature).

\item Concerning the second one, it contains a rephrasing of many other similar results (the references can be found in the proof), but written in a clear and proper way for our purposes.

\end{itemize}

\begin{lemm}\label{thelemma} Let $\bm A$ be a non-negative square matrix and $\bm d\geq \bm 0$. The following statements are equivalent.

\begin{itemize}

\item $(\bm I-\bm A)\bm x=\bm d$ has an economically meaningful solution.

\item $\bm A=\bm M\bm C$ where $\bm M=[m_{ij}]_{1\leq i,j\leq n}$, $\bm C=[c_{ij}]_{1\leq i,j\leq n}$ are non-negative square matrices such that $\bm C$ is diagonal, for every $i=1,\ldots, n$, 
$$m_{i1}+\ldots+m_{in}+d_i> 0\qquad\text{ and }\qquad c_{ii}=\frac{1}{m_{i1}+\ldots+m_{in}+d_i}.$$

\end{itemize}

\noindent Moreover, if any of the two previous statements holds, then $\rho(\bm A)\leq 1$.


\end{lemm}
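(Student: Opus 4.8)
The plan is to read the equivalence of the two bullet points as a change of variables between the solution $\bm x$ and the pair $(\bm M,\bm C)$, and then to deduce $\rho(\bm A)\le 1$ from the inequality $\bm A\bm x\le\bm x$ that every economically meaningful solution forces. First I would prove that the first bullet implies the second. Given a solution $\bm x>\bm 0$ of $(\bm I-\bm A)\bm x=\bm d$, set $\bm X=\mathrm{diag}(x_1,\dots,x_n)$, which is invertible because each $x_i>0$, and define $\bm C=\bm X^{-1}$ and $\bm M=\bm A\bm X$. Both are non-negative (a product of non-negative matrices, respectively the inverse of a positive diagonal matrix), $\bm C$ is diagonal with $c_{ii}=1/x_i$, and $\bm M\bm C=\bm A\bm X\bm X^{-1}=\bm A$. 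To match the prescribed row expression I note $m_{ij}=a_{ij}x_j$, so the $i$-th row sum is $\sum_j m_{ij}=(\bm A\bm x)_i=x_i-d_i$; hence $m_{i1}+\dots+m_{in}+d_i=x_i>0$ and $c_{ii}=1/x_i=1/(m_{i1}+\dots+m_{in}+d_i)$, exactly the required conditions.

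For the converse I would simply read the candidate solution off the decomposition by setting $x_i:=m_{i1}+\dots+m_{in}+d_i=1/c_{ii}>0$, so that $\bm C=\bm X^{-1}$ for $\bm X=\mathrm{diag}(x_1,\dots,x_n)$ and $\bm x>\bm 0$. Then $\bm C\bm x$ is the vector $\bm e$ all of whose entries equal $1$, whence $\bm A\bm x=\bm M\bm C\bm x=\bm M\bm e$ and $(\bm M\bm e)_i=\sum_j m_{ij}=x_i-d_i$; this is precisely $(\bm I-\bm A)\bm x=\bm d$. Both directions are thus bookkeeping, the only points needing care being non-negativity of $\bm M$ and $\bm C$ and the consistency of the diagonal and row-sum constraints.

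The step carrying the real content is the final assertion $\rho(\bm A)\le 1$. From either equivalent statement I obtain $\bm x>\bm 0$ with $\bm A\bm x=\bm x-\bm d\le\bm x$, using $\bm d\ge\bm 0$. I would then invoke the Perron--Frobenius theorem for the non-negative matrix $\bm A^{T}$, which shares the spectral radius of $\bm A$, to produce a non-negative left eigenvector $\bm y\ge\bm 0$, $\bm y\neq\bm 0$, with $\bm y^{T}\bm A=\rho(\bm A)\,\bm y^{T}$. Pairing with $\bm x$ gives $\rho(\bm A)\,\bm y^{T}\bm x=\bm y^{T}\bm A\bm x\le\bm y^{T}\bm x$, and since $\bm x>\bm 0$ together with $\bm y\ge\bm 0$, $\bm y\neq\bm 0$ force $\bm y^{T}\bm x>0$, dividing yields $\rho(\bm A)\le 1$. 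The main obstacle is exactly keeping this spectral step valid for a possibly reducible $\bm A$: the left eigenvector is only non-negative, not positive, so $\bm y^{T}\bm x>0$ must be argued from $\bm y\neq\bm 0$ and $\bm x>\bm 0$ rather than from positivity of $\bm y$. As an alternative avoiding Perron--Frobenius, one can iterate $\bm A\bm x\le\bm x$ to $\bm A^{k}\bm x\le\bm x$ for every $k$; bounding $\bm x$ between positive multiples of $\bm e$ then keeps all row sums of $\bm A^{k}$ bounded, so $\|\bm A^{k}\|$ stays bounded and Gelfand's formula $\rho(\bm A)=\lim_k\|\bm A^{k}\|^{1/k}\le 1$ concludes.
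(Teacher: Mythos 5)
Your proof is correct, and the equivalence of the two bullet points is established exactly as in the paper: you take $\bm C=\mathrm{diag}(1/x_i)$ and $\bm M=\bm A\bm C^{-1}$ in one direction, and read off $x_i=1/c_{ii}$ in the other, with the row-sum identity $\sum_j m_{ij}=x_i-d_i$ doing the bookkeeping in both. The only divergence is in the final claim $\rho(\bm A)\le 1$. The paper works directly from the decomposition: since $\|\bm C\bm M\|_\infty\le 1$, it bounds $\rho(\bm A)^k\le\|\bm A^k\|_\infty\le\|\bm M\|_\infty\,\|\bm C\bm M\|_\infty^{k-1}\,\|\bm C\|_\infty$, so the powers of $\rho(\bm A)$ stay bounded and $\rho(\bm A)\le 1$ follows. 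Your primary argument instead pairs $\bm A\bm x\le\bm x$ with a non-negative left eigenvector of $\bm A$ for $\rho(\bm A)$; this is valid (and you correctly flag that in the reducible case the left eigenvector is only non-negative, so positivity of $\bm y^T\bm x$ must come from $\bm x>\bm 0$), but it imports the Perron--Frobenius theorem for possibly reducible matrices, which the paper does not need at this point. Your alternative argument --- iterating $\bm A^k\bm x\le\bm x$, sandwiching $\bm x$ between multiples of the all-ones vector to bound $\|\bm A^k\|_\infty$, and applying Gelfand's formula --- is essentially the same bounded-powers idea as the paper's, just phrased through the solution $\bm x$ rather than through the factorization $\bm A=\bm M\bm C$. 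What the eigenvector route buys is independence from the specific decomposition; what the paper's (and your alternative) route buys is elementariness, relying only on submultiplicativity of $\|\cdot\|_\infty$ and the spectral radius inequality.
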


\begin{proof}  Suppose that $x=[x_1,\ldots, x_n]^T$ is an economically meaningful solution, define $\bm C$ to be the $n\times n$ diagonal matrix such that, for every $i=1,\ldots, n$, $c_{ii}=\frac{1}{x_i}$ and let $\bm M=\bm A\bm C^{-1}$. We have that
$$\bm x-\bm d=\bm A\bm x=\bm M\bm C\bm x=\bm M\begin{bmatrix}1\\ \vdots \\ 1 \end{bmatrix}.$$

 Similarly, starting from the decomposition in the second statement, we can check that the  vector $\bm x=[\frac{1}{c_{11}},\ldots, \frac{1}{c_{nn}}]^T$ is an economically meaningful solution.

Finally,  using that $\|\cdot\|_{\infty}$ is consistent and sub-multiplicative and that $\|\bm C\bm M\|_\infty\leq 1$, we have that, for every $k\geq 2$, 
$$(\rho(\bm A))^k\leq \|\bm A^k\|_\infty\leq\|\bm M\|_\infty\cdot \|\bm C\bm M\|_\infty^{k-1}\cdot \|\bm C\|_\infty\leq \|\bm M\|_\infty\cdot \|\bm C\|_\infty<\infty.$$

\noindent Note that this implies that $\rho(\bm A)\leq 1$.





\end{proof}

The matrix $\bm A$ is usually called in this context \textbf{matrix of technical coefficients} and the matrix $\bm M$ appearing in the previous lemma is called \textbf{transaction matrix}.


\begin{lemm} \label{aux} Let $\bm A$ be an irreducible, square and non-negative matrix and $\bm d\geq 0$.

\begin{itemize}

\item[(a)] If the system $(\bm I-\bm A)\bm x=\bm d$ has a non-negative and non-trivial solution, then it is economically meaningful and so $\rho(\bm A)\leq 1$.

\item[(b)] Suppose that  $\rho(\bm A)=1$. Then $(\bm I-\bm A)\bm x=\bm d$ has a non-negative solution (may be $\bm 0$) if and only if $\bm d=\bm 0$. In this case, there is a unique (up to multiples) non-negative and non-trivial solution which is, additionally, economically meaningful.

\item[(c)] Suppose that $\rho(\bm A)<1$. Then $(\bm I-\bm A)\bm x=\bm d$ has a non-negative solution for every $\bm d$. This solution is always unique. This solution is non-trivial if and only if $\bm d\neq \bm 0$ and, in this case, it is economically meaningful.


\end{itemize}





\end{lemm}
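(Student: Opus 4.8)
The three parts rest on the Perron–Frobenius theorem for irreducible non-negative matrices, which guarantees that $\bm A$ has a simple positive eigenvalue $\rho=\rho(\bm A)$ (the spectral radius) with a strictly positive right eigenvector $\bm v>\bm 0$, unique up to scaling, and that $\rho$ is the only eigenvalue admitting a non-negative eigenvector. I will also use the characterization of productive matrices quoted in Section \ref{section.pibt}: for a non-negative $\bm A$, productivity is equivalent to $(\bm I-\bm A)$ being invertible with $(\bm I-\bm A)^{-1}\geq \bm 0$, which in the irreducible case forces $(\bm I-\bm A)^{-1}>\bm 0$ (since no row of an irreducible matrix can be entirely zero, the inverse cannot have a zero entry). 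The underlying spectral fact tying everything together is that $\bm I-\bm A$ is invertible precisely when $\rho(\bm A)\neq 1$, and that productivity of an irreducible $\bm A$ is equivalent to $\rho(\bm A)<1$.

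\emph{Part (a).} Assume $\bm x\geq \bm 0$, $\bm x\neq \bm 0$, solves $(\bm I-\bm A)\bm x=\bm d\geq \bm 0$, so $\bm x=\bm A\bm x+\bm d\geq \bm A\bm x$. I would argue that no entry of $\bm x$ can vanish. Suppose the support $T=\{i:x_i>0\}$ were a proper non-empty subset of indices; then for any $i\notin T$ the inequality $x_i\geq (\bm A\bm x)_i=\sum_{j\in T}a_{ij}x_j\geq 0$ combined with $x_i=0$ forces $a_{ij}=0$ for all $i\notin T$, $j\in T$. This says there are no edges from the support block into its complement, i.e.\ $\bm A$ is block-triangular after the permutation sending $T$ to the first coordinates — contradicting irreducibility. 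Hence $\bm x>\bm 0$, i.e.\ the solution is economically meaningful, and then Lemma \ref{thelemma} gives $\rho(\bm A)\leq 1$.

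\emph{Part (b).} With $\rho(\bm A)=1$, the matrix $\bm I-\bm A$ is singular (it has $\bm v>\bm 0$ in its kernel), so a solution of $(\bm I-\bm A)\bm x=\bm d$ exists only if $\bm d$ lies in the column space. For the ``only if'' direction I would use the left Perron eigenvector $\bm w>\bm 0$ of $\bm A$ (equally guaranteed by Perron–Frobenius, since $\bm A^T$ is irreducible with the same spectral radius): left-multiplying $(\bm I-\bm A)\bm x=\bm d$ by $\bm w^T$ gives $\bm w^T(\bm I-\bm A)\bm x=(1-\rho)\bm w^T\bm x=0$ on the left but $\bm w^T\bm d$ on the right, so $\bm w^T\bm d=0$; since $\bm w>\bm 0$ and $\bm d\geq \bm 0$, this forces $\bm d=\bm 0$. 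Conversely, if $\bm d=\bm 0$, the solution set is exactly $\ker(\bm I-\bm A)$, which by simplicity of the eigenvalue $\rho=1$ is the one-dimensional line spanned by $\bm v>\bm 0$; every non-trivial non-negative member is a positive multiple of $\bm v$, hence unique up to multiples and economically meaningful.

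\emph{Part (c).} With $\rho(\bm A)<1$, the Neumann series $\sum_{m\ge 0}\bm A^m$ converges to $(\bm I-\bm A)^{-1}$, which is therefore non-negative; equivalently $\bm A$ is productive, so by the quoted characterization $(\bm I-\bm A)^{-1}$ exists and is non-negative, and by irreducibility strictly positive. Thus $\bm x=(\bm I-\bm A)^{-1}\bm d$ is the unique solution for every $\bm d$, and it is non-negative because $(\bm I-\bm A)^{-1}\geq\bm 0$ and $\bm d\geq\bm 0$. It is trivial exactly when $\bm d=\bm 0$ (invertibility makes the correspondence $\bm d\mapsto\bm x$ a bijection fixing $\bm 0$), and when $\bm d\neq\bm 0$ the strict positivity $(\bm I-\bm A)^{-1}>\bm 0$ together with $\bm d\geq\bm 0$, $\bm d\neq\bm 0$ yields $\bm x>\bm 0$, i.e.\ economically meaningful.

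\emph{Main obstacle.} The part requiring the most care is the support argument in (a): turning the entrywise inequality $\bm x\geq\bm A\bm x$ into a genuine contradiction with irreducibility must be done cleanly, identifying the zero-support block as an invariant subset that exhibits reducibility. The remaining parts are direct once the correct Perron–Frobenius ingredients (simplicity of $\rho$, positivity of both left and right eigenvectors) and the productivity characterization are invoked; the only subtlety there is justifying the strictness $(\bm I-\bm A)^{-1}>\bm 0$ in the irreducible case rather than mere non-negativity.
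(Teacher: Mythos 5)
Your proposal is correct and follows essentially the same route as the paper: the support argument in (a) is the entrywise version of the paper's block decomposition $-\bm E\bm x'=\bm d''$ forcing $\bm E=\bm 0$ and contradicting irreducibility, (b) uses the same left Perron eigenvector compatibility argument, and (c) rests on the same invertibility of $\bm I-\bm A$ with non-negative inverse when $\rho(\bm A)<1$. The only point to tighten is your parenthetical justification of $(\bm I-\bm A)^{-1}>\bm 0$ in the irreducible case (the clean argument is via $\sum_{m=0}^{n-1}\bm A^m>\bm 0$ for irreducible $\bm A$, or simply by applying your part (a) to the non-trivial solution $\bm x=(\bm I-\bm A)^{-1}\bm d$), but this does not change the structure of the proof.
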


\begin{proof} \fbox{Proof of (a)}  Suppose that the system has such a solution and that it is not economically meaningful. We may assume (performing a permutation) that $\bm x=[\bm x'\mid \bm 0]^T$, where $\bm x'>\bm 0$. This is a contradiction with the fact that $\bm A$ is irreducible. Let us also split in two parts the vector $\bm d=[\bm d'\mid\bm d'']^T$. Then,
$$(\bm I-\bm A)\bm x=\bm d\Longrightarrow \left[\begin{array}{c|c}\bm I-\bm F & .-\bm G\\ \hline -\bm E& \bm I -\bm H\end{array}\right]\begin{bmatrix}\bm x'\\ \hline \bm 0 \end{bmatrix}=\begin{bmatrix}\bm d'\\ \hline \bm d'' \end{bmatrix}\Longrightarrow -\bm E\bm x'=\bm d''.$$

\noindent Since the entries in $(-\bm E)$ are non-positive and the entries in $\bm d''$ are non-negative, the only possibility is that $\bm E=\bm 0$ (and $\bm d''=\bm 0$).  Since we have proved that the solution needs to be economically meaningful, we can use Lemma \ref{thelemma} and conclude that $\rho(\bm A)\leq 1$.

\noindent\fbox{Proof of (b)}   If $\bm d=\bm 0$, Perron-Frobenius Theorem ensures that there is a unique solution up to multiples (which, in fact, is economically meaningful, see Theorem 10.11 in \cite{AEP}).

On the other hand, note that, as a consequence of Perron-Frobenius Theorem (see Theorem 10.12 in \cite{AEP}), there is some positive left eigenvector $\bm z$ such that $\bm z(\bm I-\bm A)=\bm 0$. If we want the system  $(\bm I-\bm A)\bm x=\bm d$ to be compatible, then $\bm z\bm d=\bm 0$. But since $\bm z$ is positive and $\bm d$ is non-negative, the only possibility is $\bm d=\bm 0$.

\noindent\fbox{Proof of (c)} If $\rho(\bm A)<1$, then $(\bm I-\bm A)$ is invertible and its inverse is non-negative (see Theorem 10.11 in \cite{AEP}). So $\bm x=(\bm I-\bm A)^{-1}\bm d$. Obviously if $\bm d=\bm 0$, then $\bm x$ is not economically meaningful. On the other hand, if $\bm d\neq \bm 0$, then $\bm x$ is economically meaningful if and only if all the rows in $(\bm I-\bm A)$ are non-trivial and this is guaranteed if $(\bm I-\bm A)$ is invertible.




\end{proof}

We would like to remark the following:

\begin{rem}\label{rem.tf} In order to analyze existence and uniqueness of solutions of $(\bm I-\bm A)\bm x=\bm d$ we can assume without loss of generality that $\bm A$ is block triangular, since solutions of the previous system are univocally related to solutions of the system $(\bm I-\tilde{\bm A})\tilde{\bm x}=\bm P^{-1}\bm d$, with $\tilde{\bm A}$ being a block triangular form of $\bm A$ and $\bm A=\bm P\tilde{\bm A}\bm P^{-1}$, through the transformation $\tilde{\bm x}=\bm P^{-1} \bm x$.


\end{rem}

\section{Closed case} \label{closed}


The proof of Theorems \ref{closed.uniqueness} (in this section), \ref{open.existence} and \ref{open.uniqueness} (in the following one) are very similar to the proof of the following result. So the proof of the following result is more detailed and used as reference for the others.

\begin{theo}[existence, closed case] \label{closed.existence} In the context of Notation ($\star$), the following are equivalent:

\begin{itemize}

\item[(EC1)] $(\bm I-\bm A)\bm x=\bm 0$ has at least one economically meaningful solution.

\item[(EC2)] $\bm A=\bm M\bm C$ where $\bm M=[m_{ij}]_{1\leq i,j\leq n}$, $\bm C=[c_{ij}]_{1\leq i,j\leq n}$ are square non-negative matrices such that $\bm C$ is diagonal and, for every $i=1,\ldots, n$,
$$m_{i1}+\ldots+m_{in}\neq 0\qquad\text{and}\qquad c_{ii}=\frac{1}{m_{i1}+\ldots+m_{in}}. $$

\item[(EC3)] $\rho(\bm A)=1$ (in particular, this is equivalent to saying that, for $i=1,\ldots, k$, $\rho(\bm A_{ii})\leq 1$ and there is at least some $i$ for which $\rho(\bm A_{ii})=1$). Moreover,  $\rho(\bm A_{ii})=1$ if and only if $\bm A_{ij}=\bm 0$, for every $j\neq i$.

\item[(EC4)]  The index of $G_{\bm A}$ equals 1 (in particular, this is equivalent to saying that $\mathcal S_{>1}$ is empty and that $\mathcal S_1$ is not). Moreover, a strongly connected component is a closure if and only if it is contained in $\mathcal S_1$ (so there is at least 1).


\end{itemize}

\noindent Moreover, the following (weaker) statements are also equivalent:

\begin{itemize}

\item[(EC5)] The system $(\bm I-\bm A)\bm x=\bm 0 $ has a non-negative and non-trivial solution.

\item[(EC6)] There is at least some $i$ satisfying that  $\rho(\bm A_{ii})=1$ and that, if $i>1$, then for every $j$, $1\leq j<i$ such that $\rho(\bm A_{jj})\geq 1$, if any, we have that $\bm A_{ji}=\bm 0$.

\item[(EC7)]  $\mathcal S_1$ contains at least one strongly connected component with the property that the all the direct predecessors (if any) of its vertices lie in $\mathcal S_{<1}$.




\end{itemize}

\end{theo}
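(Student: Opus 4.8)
The plan is to reduce everything to the block triangular form (Remark \ref{rem.tf}) and then analyse the system block by block via Lemmas \ref{thelemma} and \ref{aux}, exploiting that the spectral radius of a block triangular matrix is the maximum of the spectral radii of its diagonal blocks, and that $(\bm I-\bm A_{ii})^{-1}=\sum_{m\ge 0}\bm A_{ii}^{m}$ is \emph{strictly} positive whenever $\bm A_{ii}$ is irreducible with $\rho(\bm A_{ii})<1$. For the first block of equivalences, (EC1)$\Leftrightarrow$(EC2) is exactly Lemma \ref{thelemma} specialised to $\bm d=\bm 0$. To get (EC1)$\Rightarrow$(EC3) I would note that an economically meaningful $\bm x$ gives $\bm A\bm x=\bm x$ with $\bm x>\bm 0$, so $1$ is an eigenvalue with a positive eigenvector, whence $\rho(\bm A)=1$; the ``moreover'' is obtained by pairing the $i$-th block equation $(\bm I-\bm A_{ii})\bm x_i=\sum_{j>i}\bm A_{ij}\bm x_j$ with the positive left Perron eigenvector $\bm z_i$ of $\bm A_{ii}$, since when $\rho(\bm A_{ii})=1$ the left side vanishes and the right side is a non-negative pairing, forcing every $\bm A_{ij}=\bm 0$. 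For the converse (EC3)$\Rightarrow$(EC1) I would build $\bm x$ by back-substitution from the last block upward, using the Perron eigenvector on each $\rho=1$ (closure) block and $\bm x_i=(\bm I-\bm A_{ii})^{-1}\sum_{j>i}\bm A_{ij}\bm x_j$ on each $\rho<1$ block; the ``moreover'' of (EC3) guarantees that each such block has a nonzero forward block, so strict positivity propagates. Finally (EC3)$\Leftrightarrow$(EC4) is a dictionary translation: the index of $G_{\bm A}$ is $\rho(\bm A)$, and a strongly connected component is a closure exactly when its forward blocks vanish, i.e.\ (by the ``moreover'') exactly when $\rho(\bm A_{ii})=1$, that is, when it lies in $\mathcal S_1$.

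For the weaker equivalences I would first observe that (EC6)$\Leftrightarrow$(EC7) is again a translation, since $\bm A_{ji}\ne\bm 0$ with $j<i$ encodes precisely a direct edge from the $j$-th component into the $i$-th one. The implication (EC5)$\Rightarrow$(EC6) I expect to be the routine half: given a non-negative nontrivial $\bm x$, take $i$ to be the largest index with $\bm x_i\ne\bm 0$; then the $i$-th block equation reads $(\bm I-\bm A_{ii})\bm x_i=\bm 0$ with $\bm x_i\ge\bm 0$ and $\bm x_i\ne\bm 0$, so Lemma \ref{aux} forces $\rho(\bm A_{ii})=1$ and $\bm x_i>\bm 0$. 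If some $j<i$ had $\rho(\bm A_{jj})\ge 1$ and $\bm A_{ji}\ne\bm 0$, then the $j$-th block equation would make $(\bm I-\bm A_{jj})\bm x_j$ equal to a nonzero non-negative vector, contradicting Lemma \ref{aux}(a)--(b); hence $\bm A_{ji}=\bm 0$, which is (EC6). The reverse implication (EC6)$\Rightarrow$(EC5) is the one I would prove constructively, seeding the Perron eigenvector on the distinguished $\rho=1$ component, setting all strictly later blocks to $\bm 0$, and back-substituting upward.

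The hard part will be exactly this last construction, namely keeping the back-substituted vector non-negative. The danger is that activating the seed forces, through the directed edges, some block with $\rho\ge 1$ to ``supply'' a positive amount; by Lemma \ref{aux} such a block then admits no non-negative balance, and the construction collapses. So the crux is to show that under (EC6) the backward propagation of the support only ever reaches blocks in $\mathcal S_{<1}$, where $(\bm I-\bm A_{ii})^{-1}\ge\bm 0$ keeps everything non-negative. This is where I would spend most of the effort and scrutinise the statement most carefully: I would track how the active support grows along predecessors and verify whether the condition on \emph{direct} predecessors is genuinely enough to block every $\rho\ge 1$ ancestor, or whether one must additionally control reachability through intermediate $\mathcal S_{<1}$ components, since a $\rho\ge 1$ component that reaches the seed only indirectly is still forced to supply it.
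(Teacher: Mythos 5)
Your plan reproduces the paper's proof almost verbatim: the reduction to the block triangular form via Remark \ref{rem.tf}, the equivalence (EC1)$\Leftrightarrow$(EC2) as a special case of Lemma \ref{thelemma}, the block-by-block back-substitution driven by Lemma \ref{aux} for (EC1)$\Leftrightarrow$(EC3) and for (EC5)$\Rightarrow$(EC6), and the dictionary translations for (EC3)$\Leftrightarrow$(EC4) and (EC6)$\Leftrightarrow$(EC7). All of that is sound and matches what the paper does.

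The step you single out as the hard one --- keeping the back-substituted vector non-negative in (EC6)$\Rightarrow$(EC5) --- is indeed where the argument breaks, and your suspicion that a condition on \emph{direct} predecessors cannot control indirect supply chains is correct: the implication is false as stated. Take
$$\bm A=\begin{bmatrix}2 & 1 & 0\\ 0 & \tfrac12 & 1\\ 0 & 0 & 1\end{bmatrix},\qquad \bm I-\bm A=\begin{bmatrix}-1 & -1 & 0\\ 0 & \tfrac12 & -1\\ 0 & 0 & 0\end{bmatrix},$$
already in block triangular form with $1\times 1$ irreducible diagonal blocks of spectral radii $2$, $\tfrac12$, $1$. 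Condition (EC6) holds with $i=3$ (the only $j<3$ with $\rho(\bm A_{jj})\geq 1$ is $j=1$, and $\bm A_{13}=\bm 0$), and (EC7) holds since the only external direct predecessor of $S_3$ is $S_2\in\mathcal S_{<1}$; yet the system reads $x_1=-x_2$, $x_3=\tfrac12 x_2$, so its only non-negative solution is $\bm x=\bm 0$ and (EC5) fails. The mechanism is exactly the one you anticipated: $x_3>0$ forces $x_2>0$, which in turn forces the $\rho>1$ block to supply a positive amount, which Lemma \ref{aux} forbids. The paper's own construction assigns $\bm x_1=\bm 0$ (the ``in other case'' branch) without verifying the first block equation, which becomes $\bm 0=\bm A_{12}\bm x_2\neq\bm 0$. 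The repair is to replace ``direct predecessors'' by ancestors: the seed component in $\mathcal S_1$ must not be reachable from any component of $\mathcal S_1\cup\mathcal S_{>1}$ other than itself. Under that hypothesis your construction does close: the support of the solution is exactly the set of blocks that reach the seed, all of which lie in $\mathcal S_{<1}$ except the seed, and any block outside the support has no direct successor inside it, so its equation is satisfied by $\bm 0$. The analogous reachability correction is also needed in the corresponding conditions of Theorems \ref{closed.uniqueness}, \ref{open.existence} and \ref{open.uniqueness}.
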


\begin{proof}  According to Remark \ref{rem.tf}, we may assume that  $\bm A$ is already in its block triangular form. The equivalence between (EC1) and (EC2) is already proved in Lemma \ref{thelemma}.

\noindent \fbox{Proof of (EC1)$\Leftrightarrow$(EC3)} Let us consider the systems 
$$(\bm I-\bm A_{kk})\bm x_k=\bm 0,\; \ldots \;,(\bm I-\bm A_{ii})\bm x_{i}=\bm A_{i,i+1}\bm x_{i+1}+\ldots+\bm A_{ik}\bm x_k,\; \ldots $$

\noindent The system has a solution if and only if some of these systems have. In this case, the solution is $\bm x=[\bm x_1\mid \ldots\mid \bm x_k]^T$.

\noindent On the one hand, if there exists such a solution, according to Lemma \ref{aux}, we have that
\begin{equation} \label{eq.reph}\bm A_{i,i+1}\bm x_{i+1}+\ldots+\bm A_{ik}\bm x_k\begin{cases}=\bm 0 & \text{if }\rho(\bm A_{ii})=1\\ \neq \bm 0 & \text{if }\rho(\bm A_{ii})<1 \end{cases}.\end{equation}

\noindent On the other hand, the conditions in (EC3) ensure the conditions in Equation \eqref{eq.reph}. So Lemmas \ref{thelemma} and \ref{aux} ensure the existence of the solution and
$$\bm x_i=\begin{cases}\text{the unique positive solution of }(\bm I-\bm A_{ii})\bm x_i=\bm 0 &\text{if }\rho(\bm A_{ii})=1\\  
(\bm I-\bm A_{ii})^{-1}(\bm A_{i,i+1}\bm x_{i+1}+\ldots+\bm A_{ik}\bm x_k)& \text{if }\rho(\bm A_{ii})<1\end{cases}.$$




\noindent \fbox{Proof of (EC3)$\Leftrightarrow$(EC4)} This is a direct translation between the language of the block triangular form and the language of graphs. We are using the fact that the vertices corresponding to each diagonal block correspond to the vertices in each of the strongly connected components, and the positive entries in the matrices $\bm A_{ji}$ represent the edges starting at a vertex in $G_j$ and ending at a vertex i $G_i$.

\noindent \fbox{Proof of (EC5)$\Leftrightarrow$(EC6)} It is very similar to the proof of the equivalence (EC1)$\Leftrightarrow$(EC3).

\noindent In the one hand, let us suppose that the system has such a solution $\bm x=[\bm x_1\mid\ldots\mid \bm x_i\mid \bm 0]^T$. By block multiplication, we can check the necessity of the conditions in (EC6)

\noindent On the other hand,  we can to construct a solution $\bm x=[\bm x_1\mid\ldots\mid \bm x_i\mid \bm 0]^T$ as follows:
$$\bm x_j=\begin{cases}\text{the unique positive solution of }(\bm I-\bm A_{jj})\bm x_j=\bm 0 &\text{if }j=i\\  
(\bm I-\bm A_{jj})^{-1}(\bm A_{j,j+1}\bm x_{j+1}+\ldots+\bm A_{jk}\bm x_k)& \text{if }\rho(\bm A_{jj})<1 \text{ and }j<i\\ \bm 0 & \text{in other case}\end{cases}.$$

\noindent \fbox{Proof of (EC6)$\Leftrightarrow$(EC7)} Again, this is a direct translation between the language of the block triangular form and the language of graphs.


\end{proof}

\begin{theo}[uniqueness, closed case] \label{closed.uniqueness} In the context of Notation ($\star$), the following are equivalent:

\begin{itemize}

\item[(UC1)] $(\bm I-\bm A)\bm x=\bm 0$ has a unique (up to multiples) non-trivial and non-negative solution.

\item[(UC2)] There exists a unique $i$, $1\leq i\leq k$ satisfying the following conditions:
\begin{itemize}

\item $\rho(\bm A_{ii})=1$.

\item For every $j$, $1\leq j<i$ (if any), such that $\rho(A_{jj})\geq 1$, we have that $\bm A_{ji}=0$.

\end{itemize}

\item[(UC3)] There is exactly one strongly connected component in $\mathcal S_1$ with the property that all the direct predecessor of its vertices (if any) lie in $\mathcal S_{<1}$.

\end{itemize}

\end{theo}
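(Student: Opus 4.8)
The plan is to run the same template used for Theorem~\ref{closed.existence}, but now keeping track of \emph{how many} essentially different non-negative solutions can occur, and then to split the proof sharply between the graph translation and the analytic content. By Remark~\ref{rem.tf} I may assume $\bm A$ is already in block triangular form, so the system reads $(\bm I-\bm A_{ii})\bm x_i=\bm A_{i,i+1}\bm x_{i+1}+\dots+\bm A_{ik}\bm x_k$ for $i=1,\dots,k$, solved by back-substitution from $i=k$ upwards. The equivalence (UC2)$\Leftrightarrow$(UC3) is exactly the dictionary already invoked for (EC6)$\Leftrightarrow$(EC7): the vertices of each strongly connected component correspond to one diagonal block, the spectral radius of that block places the component in $\mathcal S_{<1}$, $\mathcal S_1$ or $\mathcal S_{>1}$, and $\bm A_{ji}\neq\bm 0$ says precisely that the component of $j$ is a direct predecessor of the component of $i$; so (UC3) is the verbatim graph reading of (UC2) and needs no separate argument. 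Thus the whole weight lies on (UC1)$\Leftrightarrow$(UC2), and I would phrase (UC2) as (EC6) strengthened from ``at least one'' to ``exactly one'' distinguished index, so that Theorem~\ref{closed.existence} already hands me the existence half for free.

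For (UC2)$\Rightarrow$(UC1) I would show that a unique distinguished index $i^{*}$ pins down every solution up to scale. Given a non-trivial non-negative $\bm x=[\bm x_1\mid\dots\mid\bm x_k]^T$, let $i$ be the largest index with $\bm x_i\neq\bm 0$. As in (EC5)$\Rightarrow$(EC6), block row $i$ then reads $(\bm I-\bm A_{ii})\bm x_i=\bm 0$, so Lemma~\ref{aux} forces $\rho(\bm A_{ii})=1$ and makes $\bm x_i$ a positive multiple of the Perron eigenvector of $\bm A_{ii}$; block multiplication on the rows $j<i$ with $\rho(\bm A_{jj})\geq1$ forces $\bm A_{ji}=\bm 0$, so $i$ satisfies (UC2) and hence $i=i^{*}$. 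Once the scale of $\bm x_{i^{*}}$ is fixed, the blocks below are determined: for $j<i^{*}$ with $\rho(\bm A_{jj})<1$ the matrix $\bm I-\bm A_{jj}$ is invertible (Lemma~\ref{aux}(c)), giving the unique value $(\bm I-\bm A_{jj})^{-1}(\bm A_{j,j+1}\bm x_{j+1}+\dots)$; for $j<i^{*}$ with $\rho(\bm A_{jj})\geq1$ the block is forced to $\bm 0$, since such a $j$ cannot itself be distinguished (that would contradict uniqueness of $i^{*}$), hence it has a direct predecessor $l$ with $\rho(\bm A_{ll})\geq1$ and $\bm A_{lj}\neq\bm 0$, and a nonzero $\bm x_j$ would make the right-hand side of row $l$ nonzero, contradicting Lemma~\ref{aux}(b). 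This leaves no freedom, so the solution is unique up to multiples.

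For the converse (UC1)$\Rightarrow$(UC2) I would argue by contraposition. If \emph{no} distinguished index exists, Theorem~\ref{closed.existence} yields no non-trivial solution, contradicting (UC1); so I assume two distinguished indices $i_1<i_2$ and attach to each the solution $\bm x^{(i_1)},\bm x^{(i_2)}$ built by the existence recipe — Perron eigenvector in the distinguished block, back-substitution through the $\rho<1$ blocks below, and $\bm 0$ elsewhere — observing that $\bm x^{(i_1)}$ vanishes on block $i_2$ while $\bm x^{(i_2)}$ is strictly positive there, so the two cannot be proportional and (UC1) fails. The step I expect to be the real obstacle is exactly the verification that this recipe genuinely produces a \emph{non-negative} solution for each distinguished index: one must confirm that for every block $j$ with $\rho(\bm A_{jj})\geq1$ sitting below the distinguished block the accumulated input $\bm A_{j,j+1}\bm x_{j+1}+\dots$ is truly $\bm 0$, so that Lemma~\ref{aux}(b) is not violated, i.e.\ that back-substituting the Perron vector never feeds a forbidden $\rho\geq1$ block even through an intervening chain of $\rho<1$ blocks. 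Controlling this cancellation is where the predecessor condition in (UC2) must do its essential work, and it is the one place that demands genuine care rather than routine bookkeeping.
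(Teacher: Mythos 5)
The obstacle you flag in your last paragraph is real, and it is not something careful bookkeeping can close: the cancellation you need simply does not follow from the direct-predecessor condition, and in fact the implication (UC2)$\Rightarrow$(UC1) is false. Take
\[
\bm A=\begin{bmatrix} 2 & 1 & 0\\ 0 & 0 & 1\\ 0 & 0 & 1\end{bmatrix},\qquad
\bm I-\bm A=\begin{bmatrix} -1 & -1 & 0\\ 0 & 1 & -1\\ 0 & 0 & 0\end{bmatrix},
\]
already in block triangular form with $1\times 1$ blocks and $\rho(\bm A_{11})=2$, $\rho(\bm A_{22})=0$, $\rho(\bm A_{33})=1$. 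The index $i=3$ satisfies both bullets of (UC2) (the only $j<3$ with $\rho(\bm A_{jj})\geq 1$ is $j=1$, and $\bm A_{13}=\bm 0$), and it is the unique such index, so (UC2) holds. But the system $(\bm I-\bm A)\bm x=\bm 0$ reads $-x_1-x_2=0$, $x_2-x_3=0$, $0=0$, whose only non-negative solution is $\bm x=\bm 0$; so (UC1) fails. This is exactly the scenario you feared: the Perron vector of block $3$ forces $x_2>0$ in the intervening $\rho<1$ block, which then feeds the forbidden block $1$ through $\bm A_{12}\neq\bm 0$, and the hypothesis $\bm A_{13}=\bm 0$ is blind to this two-step path. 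Consequently the existence half you hoped to get ``for free'' is unavailable: the same matrix shows that (EC6) does not imply (EC5) in Theorem~\ref{closed.existence} either. Your uniqueness-given-existence argument (largest nonzero block is distinguished, forced zeros, back-substitution) is essentially sound — modulo separating the case $\rho(\bm A_{jj})>1$, where Lemma~\ref{aux}(a) rather than a predecessor argument kills $\bm x_j$ — but existence genuinely fails under (UC2).

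The converse direction breaks as well. Replacing the entry $2$ above by $1$ gives a matrix whose non-negative solutions are exactly the non-negative multiples of $[1,0,0]^T$, so (UC1) holds; yet both $i=1$ (vacuously) and $i=3$ satisfy the two bullets, so (UC2) fails. Your two-tower argument for (UC1)$\Rightarrow$(UC2) collapses here because the ``solution'' your recipe attaches to $i_2=3$, namely $[0,1,1]^T$, is not a solution at all (its first row gives $-1\neq 0$). So the defect is not in your reading but in the statement itself: conditions formulated through \emph{direct} predecessors (one-step conditions $\bm A_{ji}=\bm 0$) cannot characterize uniqueness — and the paper's own proof, which asserts both directions in two sentences by appeal to Lemma~\ref{aux} and the construction of Theorem~\ref{closed.existence}, glosses over precisely the point you isolated. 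The correct condition must be stated in terms of reachability: the distinguished block must receive no input, direct or indirect, from any block of spectral radius $\geq 1$; equivalently, every strongly connected component from which the component in $\mathcal S_1$ is reachable must lie in $\mathcal S_{<1}$. With ``direct predecessor'' replaced by ``ancestor'' in (UC2)/(UC3), your proof template — necessity via the largest nonzero block, sufficiency via back-substitution along the set of ancestors (now all of spectral radius $<1$), and non-proportional towers for two distinct admissible blocks — does go through.
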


\begin{proof}  As in the previous result, we may assume that  $\bm A$ is already in its block triangular form.

\noindent \fbox{Proof of (UC1)$\Leftrightarrow$(UC2)} To prove the equivalence between the first two conditions, let us consider, again,  the systems 
$$(\bm I-\bm A_{kk})\bm x_k=\bm 0,\; \ldots \;,(\bm I-\bm A_{ii})\bm x_{i}=\bm A_{i,i+1}\bm x_{i+1}+\ldots+\bm A_{ik}\bm x_k,\; \ldots $$

\noindent On the one hand, let us suppose that such a solution exists. The conditions in (UC2) follow from Lemma \ref{aux}.

\noindent On the other hand, in order to both conditions to hold, the solutions need to be multiples of a given vector of the type $\bm x=[\bm x_1\mid\ldots\mid \bm x_i\mid \bm 0]^T$. This solution is the same as the one constructed in the proof of the equivalence (EC1)$\Leftrightarrow$(EC3).



\noindent \fbox{Proof of (UC2)$\Leftrightarrow$(UC3)} As in the proof of the previous theorem, the equivalence between the last two conditions follows from a direct translation between the block triangular form and the graph theory language.

\end{proof}

\begin{remark} \label{rem.1}  As a matter of example of how some direct implications can be derived from aspects concerning the connectivity between the vertices, let us suppose that, in the context of Notation ($\star$), one of the vertices, $S_r$, is a sink. In the context of Leontief's Input-Output Model, we can understand this situation as if this sector ``parasitizes'' the rest of the sectors in the economy. The out-degree of $S_r$ is 0 and so there is a row in $\bm A$ of the type $[0,\ldots, 0,\lambda,0,\ldots,0]$, where $\lambda$ is a positive number placed in the position $r$. Let us also consider the system $(\bm I-\bm A')\bm x'=\bm 0$ where $\bm A'$ is obtained from $\bm A$  by removing column $r$ and row $r$ and $\bm x'$ is obtained from $\bm x$ removing the entry in the position $r$. According to the previous theorems, we have the following situations:

\begin{itemize}

\item If $a_{rr}\neq 1$, then the entry in the position $r$ in the solution equals $0$ ($x_r=0$), so there is no economically meaningful solution in this case. In the context of Leontief's Input-Output Model, this means that the sector produces more than it needs or less than it needs and so, since there is no external demand, its production needs to be 0 to reach the equilibrium.

\item The case $a_{rr}=1$ is closely related to the open case: all the sectors need to ``feed'' this special sector and this special sector does not ``help'' the others, playing the role of an external demand. In fact, the system can be reduced to 
$$(\bm I-\bm A')\bm x'=x_r\begin{bmatrix}a_{1r}\\\vdots\\a_{r-1,r}\\a_{r+1,r}\\\vdots \\a_{nr}\end{bmatrix}\qquad\text{where }x_r\begin{bmatrix}a_{1r}\\\vdots\\a_{r-1,r}\\a_{r+1,r}\\\vdots \\a_{nr}\end{bmatrix}\neq \bm 0.$$

\item   $(\bm I-\bm A)\bm x=\bm 0$ has an economically meaningful solution if and only if $a_{rr}=1$, $\rho(\bm A')\leq 1$ and any strongly connected component is a closure if and only if it is contained in $\mathcal S_1$. In particular, if $\rho(\bm A')<1$, then $\bm A'$ is irreducible (there is no closures in $G_{\bm A'}$) and $S_r$ can be reached from some other vertex in the graph. Note that this is coherent with the results in the next section.

\item $(\bm I-\bm A)\bm x=\bm 0$ has a non-negative solution if and only if either $(\bm I-\bm A')\bm x'=\bm 0$ has a non-negative solution (the sector $S_r$ is ignored) or $S_r$ is, additionally, a source, that is, it is an isolated vertex not connected to any other vertex (which corresponds to the trivial case in which all the sectors are ignored and only $S_r$ is producing).

\item $\bm A\bm x=\bm 0$ has a unique non-negative solution if and only if either $a_{rr}\neq 1$ and $(\bm I-\bm A')\bm x'=\bm 0$ has a unique non-negative solution. In this case, the production $x_r$ is necessary 0.

\end{itemize}

\noindent Similar simplifications of the problem can be done if we consider a distinguished closure instead of a sink.

\end{remark}

\section{Open case} \label{open}







We will not include the proofs of the two following theorems, since they are very similar to the ones in the previous section.

\begin{theo}[existence, open model] \label{open.existence} In the context of Notation ($\star$), let $\bm d$ be a non-negative and non-trivial vector such that $\bm P\bm d=[\bm d_1\mid\ldots\mid \bm d_k]^T$. The following statements are equivalent:

\begin{itemize}

\item[(EO1)] The system $(\bm I-\bm A)\bm x=\bm d$ has at least one economically meaningful solution.

\item[(EO2)]  $\bm A=\bm M\bm C$ where $\bm M=[m_{ij}]_{1\leq i,j\leq n}$, $\bm C=[c_{ij}]_{1\leq i,j\leq n}$ are non-negative square matrices such that $\bm C$ is diagonal, and for every $i=1,\ldots, n$,
$$m_{i1}+\ldots+m_{in}+d_i\neq 0 \qquad\text{and}\qquad c_{ii}=\frac{1}{m_{i1}+\ldots+m_{in}+d_i}.$$

\item[(EO3)] $\rho(\bm A)\leq 1$ (in particular, this is equivalent to saying that, for $i=1,\ldots, k$, $\rho(\bm A_{ii})\leq 1$) and $\rho(\bm A_{ii})=1$ if and only if $\bm d_i=\bm 0$ and $\bm A_{ij}=\bm 0$ for every $j\neq i$.


\item[(EO4)] The index of $G_{\bm A}$ is less or equal to 1 (in particular, this is equivalent to saying that $\mathcal S_{>1}$ is empty). Moreover, a strongly connected component is contained in $\mathcal S_1$  if and only if it is a closure and the $\bm d_i$ corresponding to its vertices equals $\bm 0$.

\end{itemize}

Moreover, for $J$ being the set of indices $j$ such that $\bm d_j\neq\bm 0$ (which is non-empty), the following statements are also equivalent:

\begin{itemize}

\item[(EO5)] The system $(\bm I-\bm A)\bm x=\bm d$ has at least one non-negative and non-trivial solution.

\item[(EO6)] For every $i\in J$, $\rho(\bm A_{ii})<1$ and for every $i\notin J$, if $\rho(A_{ii})\geq 1$, then $\bm A_{ij}=\bm 0$, for every $j\in J$.

\item[(EO7)] The strongly connected components corresponding to the set $J$ are contained in $\mathcal S_{<1}$ and the strongly connected components contained in $\mathcal S_1\cup\mathcal S_{>1}$ do not have any direct predecessor in the strongly connected components corresponding to the set $J$.

\end{itemize}

\end{theo}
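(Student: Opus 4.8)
The plan is to follow the template of Theorem \ref{closed.existence} and reduce everything to the block systems produced by back-substitution. By Remark \ref{rem.tf} I may assume $\bm A$ is already in block triangular form, so writing $\bm x=[\bm x_1\mid\ldots\mid\bm x_k]^T$ and $\bm P\bm d=[\bm d_1\mid\ldots\mid\bm d_k]^T$, the system $(\bm I-\bm A)\bm x=\bm d$ is equivalent to the family
$$(\bm I-\bm A_{ii})\bm x_i=\bm d_i+\bm A_{i,i+1}\bm x_{i+1}+\ldots+\bm A_{ik}\bm x_k=:\bm b_i,\qquad i=k,k-1,\ldots,1,$$
which is solved from the bottom block upwards. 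The equivalence (EO1)$\Leftrightarrow$(EO2) is exactly Lemma \ref{thelemma} applied to $\bm d$, so I would dispatch it first and then concentrate on the spectral-radius conditions.

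For (EO1)$\Leftrightarrow$(EO3) I would argue block by block with Lemma \ref{aux}, just as in the proof of (EC1)$\Leftrightarrow$(EC3). An economically meaningful solution forces every $\bm x_i>\bm 0$, so each $\bm b_i$ is non-negative and Lemma \ref{aux} applies to the irreducible block $\bm A_{ii}$: when $\rho(\bm A_{ii})=1$, part (b) forces $\bm b_i=\bm 0$, and since all $\bm x_j>\bm 0$ this is equivalent to $\bm d_i=\bm 0$ together with $\bm A_{ij}=\bm 0$ for every $j\neq i$; when $\rho(\bm A_{ii})<1$, part (c) gives $\bm x_i=(\bm I-\bm A_{ii})^{-1}\bm b_i$, which is positive precisely when $\bm b_i\neq\bm 0$. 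The converse is a downward induction building a strictly positive $\bm x$: at a block with $\rho(\bm A_{ii})=1$ one takes the Perron eigenvector, and at a block with $\rho(\bm A_{ii})<1$ hypothesis (EO3) guarantees $\bm b_i\neq\bm 0$ (either $\bm d_i\neq\bm 0$, or some $\bm A_{ij}\neq\bm 0$ with $\bm x_j>\bm 0$ already produced). Every block stays strictly positive, which is exactly why no reachability considerations intervene here.

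For the weaker equivalence (EO5)$\Leftrightarrow$(EO6) I would run the same back-substitution but now allow some blocks to vanish. Necessity of (EO6) is the easier half: from a non-negative non-trivial solution, each $i\in J$ has $\bm b_i\geq\bm d_i\neq\bm 0$, so Lemma \ref{aux}(a) forces $\rho(\bm A_{ii})<1$ and part (c) makes $\bm x_i$ strictly positive; then for any $i$ with $\rho(\bm A_{ii})\geq 1$ part (b) (and the Perron--Frobenius theorem when $\rho>1$) requires $\bm b_i=\bm 0$, and since $\bm d_i=\bm 0$ this reads $\sum_{j>i}\bm A_{ij}\bm x_j=\bm 0$, whence $\bm A_{ij}=\bm 0$ for every $j\in J$ because those $\bm x_j$ are positive. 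Sufficiency is the delicate part: I would set $\bm x_i=\bm 0$ whenever $\rho(\bm A_{ii})\geq 1$ and solve the remaining blocks by back-substitution, and the crux is to verify that each block assigned $\bm 0$ genuinely has $\bm b_i=\bm 0$, so that the assignment is consistent. This consistency check is precisely where condition (EO6) must be invoked with care, and I expect it to be the main obstacle of the whole argument.

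Finally, the two translations (EO3)$\Leftrightarrow$(EO4) and (EO6)$\Leftrightarrow$(EO7) I would treat exactly as in the closed case: they are direct dictionary passages between the block triangular form and $G_{\bm A}$, reading each irreducible diagonal block $\bm A_{ii}$ as a strongly connected component, its spectral radius as the index of that component (hence the partition $\mathcal S_{<1},\mathcal S_1,\mathcal S_{>1}$), the vanishing of all off-diagonal $\bm A_{ij}$ as the component being a closure, and a nonzero $\bm A_{ij}$ as a direct edge between the corresponding components. The only statements needing explicit wording are that ``$\rho(\bm A_{ii})=1$ if and only if $\bm d_i=\bm 0$ and $\bm A_{ij}=\bm 0$ for $j\neq i$'' becomes the closure-plus-no-demand description of $\mathcal S_1$ in (EO4), and that the edge condition on blocks of spectral radius $\geq 1$ in (EO6) becomes the corresponding connectivity condition relative to $J$ in (EO7).
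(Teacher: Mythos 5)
Your handling of the first chain of equivalences is sound: (EO1)$\Leftrightarrow$(EO2) via Lemma \ref{thelemma}, the block-by-block use of Lemma \ref{aux} for (EO1)$\Leftrightarrow$(EO3) (where the downward induction keeps \emph{every} block strictly positive, so only direct, local conditions are needed), and the dictionary passage to (EO4). Your necessity argument (EO5)$\Rightarrow$(EO6) is also correct. The genuine gap is exactly the step you flag and postpone: the consistency check in the sufficiency direction (EO6)$\Rightarrow$(EO5). This is not merely the delicate part --- it cannot be carried out, because the implication is false as stated. Take
$$\bm A=\begin{bmatrix}1&1&0\\ 0&0&1\\ 0&0&0\end{bmatrix},\qquad \bm d=\begin{bmatrix}0\\ 0\\ 1\end{bmatrix},$$
with three $1\times 1$ irreducible blocks, $\rho(\bm A_{11})=1$, $\rho(\bm A_{22})=\rho(\bm A_{33})=0$ and $J=\{3\}$. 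Condition (EO6) holds (the only requirement is $\bm A_{13}=\bm 0$), and so does (EO7), yet the system reads $-x_2=0$, $x_2-x_3=0$, $x_3=1$ and has no solution whatsoever. The obstruction is precisely the one your construction meets: block $1$, assigned $\bm x_1=\bm 0$, has $\bm b_1=\bm A_{12}\bm x_2$ with $\bm x_2=(\bm I-\bm A_{22})^{-1}\bm A_{23}x_3>0$, because block $2\notin J$ has spectral radius $<1$ but is itself fed by the demand block; hence $\bm b_1\neq\bm 0$ and no consistent assignment exists. A correct sufficient condition must forbid components of $\mathcal S_1\cup\mathcal S_{>1}$ from \emph{reaching} $J$ through chains of components of $\mathcal S_{<1}$, i.e.\ it is a reachability condition, not a direct-predecessor condition; with (EO6) replaced by that condition your back-substitution closes without difficulty.

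Two further remarks so you can calibrate your last paragraph. First, (EO6) and (EO7) as literally written are not translations of each other but transposes: with the paper's convention ($a_{ij}\neq 0$ means $S_i$ is a direct predecessor of $S_j$), (EO6) says the $J$-components have no direct predecessor in $\mathcal S_1\cup\mathcal S_{>1}$, while (EO7) says the $\mathcal S_1\cup\mathcal S_{>1}$ components have no direct predecessor in the $J$-components; indeed $\bm A=\left[\begin{smallmatrix}0&1\\ 0&1\end{smallmatrix}\right]$, $\bm d=[1,0]^T$ satisfies (EO5) and (EO6) but violates (EO7). Second, you are not to blame for inheriting this: the paper omits the proof of this theorem, referring to the closed-case template, and the same defect is already present there --- the construction proving (EC6)$\Rightarrow$(EC5) in Theorem \ref{closed.existence} sets $\bm x_j=\bm 0$ on blocks with $\rho(\bm A_{jj})\geq 1$ without verifying $\bm b_j=\bm 0$, and the blocks $\rho=2,\,0,\,1$ chained by edges $1\to 2\to 3$ give a counterexample to (EC6)$\Rightarrow$(EC5) as well. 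So the honest conclusion of your attempt is the right one: the weak halves of these theorems genuinely require reachability hypotheses, contrary to the paper's claim that direct predecessors suffice.
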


\begin{theo}[uniqueness, open model] \label{open.uniqueness} Let $\bm A$ be a non-negative square matrix and let $\bm d$ be a non-negative and non-trivial vector. Suppose that $\bm A$ admits a block triangular form:
$$\bm P\bm A\bm P^{-1}=\left[\begin{array}{c|c|c} \bm A_{11} &\hdots & \bm A_{1k}\\ \hline \bm 0& \ddots & \vdots \\ \hline \bm 0& \bm 0&\bm A_{kk}\end{array}\right]$$

\noindent and let $\bm P\bm d=[\bm d_1\mid\ldots\mid \bm d_k]^T$.  Let $J$ be the set of indices $j$ such that $\bm d_j\neq 0$ ($J$ es non-empty). The following statements are equivalent:

\begin{itemize}

\item[(UO1)] The system $(\bm I-\bm A)\bm x=\bm d$ has a unique non-negative and non-trivial solution.

\item[(UO2)] For every $i\in J$, $\rho(\bm A_{ii})<1$. If $i\notin J$ and $\rho(\bm A_{ii})\geq 1$, then $\bm A_{ij}=\bm 0$, for every $j\in J$. On the other hand, if $i\notin J$ and $\rho(\bm A_{ii})=1$, then there exists some $j\notin J$ such that $\rho(\bm A_{jj})\geq 1$ and $\bm A_{ji}\neq \bm 0$.


\item[(UO3)] The strongly connected components corresponding to the set $J$ belong to $\mathcal S_{<1}$. The strongly connected components contained in $\mathcal S_1\cup \mathcal S_{>1}$ do not have any direct predecessor in the strongly connected components corresponding to the set $J$. Finally in every strongly connected component contained in $\mathcal S_1$ there is some vertex with a direct predecessor in $\mathcal S_1\cup\mathcal S_{>1}$.



\end{itemize}

\end{theo}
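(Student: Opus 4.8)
The plan is to mirror, almost verbatim, the proof of Theorem \ref{closed.uniqueness}, the only change being that the homogeneous right-hand sides are replaced by the inhomogeneous data coming from $\bm d$. By Remark \ref{rem.tf} I may assume $\bm A$ is already in block triangular form and $\bm d=[\bm d_1\mid\ldots\mid\bm d_k]^T$. Everything is read off from the triangular system
\begin{equation}\label{eq.backsub.open}(\bm I-\bm A_{ii})\bm x_i=\bm b_i,\qquad \bm b_i:=\bm d_i+\bm A_{i,i+1}\bm x_{i+1}+\ldots+\bm A_{ik}\bm x_k,\end{equation}
which is solved by back-substitution from $i=k$ down to $i=1$; at the $i$-th step $\bm b_i$ is already known from $\bm x_{i+1},\ldots,\bm x_k$, and Lemma \ref{aux} governs block $i$ according to the value of $\rho(\bm A_{ii})$.

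For (UO2)$\Rightarrow$(UO1) I would first observe that the first two conditions in (UO2) are precisely (EO6), so Theorem \ref{open.existence} already supplies a non-negative non-trivial solution and only uniqueness is left. Running \eqref{eq.backsub.open}, a block with $\rho(\bm A_{ii})<1$ is determined \emph{uniquely} by $\bm x_i=(\bm I-\bm A_{ii})^{-1}\bm b_i$ (Lemma \ref{aux}(c)); a block with $\rho(\bm A_{ii})>1$ is forced, by Lemma \ref{aux}(a), to satisfy $\bm b_i=\bm 0$ and hence $\bm x_i=\bm 0$; and a block with $\rho(\bm A_{ii})=1$ (necessarily $i\notin J$, so $\bm d_i=\bm 0$) requires $\bm b_i=\bm 0$ and then, by Lemma \ref{aux}(b), $\bm x_i=t_i\bm v_i$ for the Perron eigenvector $\bm v_i$ and a free parameter $t_i\geq 0$. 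The only possible non-uniqueness comes from these parameters, and the third condition of (UO2) eliminates each of them: if $\rho(\bm A_{ii})=1$ and $j$ is the index with $\rho(\bm A_{jj})\geq 1$ and $\bm A_{ji}\neq\bm 0$ provided by (UO2), then block $j$ also demands $\bm b_j=\bm 0$, and since $\bm b_j$ contains the non-negative summand $t_i\,\bm A_{ji}\bm v_i$ with $\bm v_i>\bm 0$ and $\bm A_{ji}\neq\bm 0$, we are forced to take $t_i=0$. Thus every free parameter vanishes and the solution is unique.

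For the converse (UO1)$\Rightarrow$(UO2) I would proceed by contraposition on each clause, exactly as in the closed case. The first two clauses are already forced by mere existence through (EO6): an index $i\in J$ with $\rho(\bm A_{ii})\geq 1$ would give $\bm b_i\neq\bm 0$ and contradict Lemma \ref{aux}(a)--(b), and a block with $\rho(\bm A_{ii})\geq1$, $i\notin J$, feeding some $j\in J$ would again make $\bm b_i\neq\bm 0$. For the third clause, if some block with $\rho(\bm A_{ii})=1$, $i\notin J$, had \emph{no} incoming $\bm A_{ji}\neq\bm 0$ from a block with $\rho(\bm A_{jj})\geq 1$, then its parameter $t_i$ would be left unconstrained in \eqref{eq.backsub.open}; adding $\varepsilon\bm v_i$ to $\bm x_i$ and re-solving the invertible $\rho<1$ blocks above it would produce, for small $\varepsilon>0$, a second non-negative solution, contradicting (UO1). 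Finally, (UO2)$\Leftrightarrow$(UO3) is the same translation already used three times: a nonzero $\bm A_{ji}$ is an edge of $G_{\bm A}$ from $G_j$ to $G_i$, the diagonal blocks with $\rho=1$, $<1$, $>1$ are the components of $\mathcal S_1,\mathcal S_{<1},\mathcal S_{>1}$, the indices in $J$ are those carrying demand, and ``$\bm A_{ji}\neq\bm 0$ for some $j$ with $\rho(\bm A_{jj})\geq 1$'' becomes ``the component has a direct predecessor in $\mathcal S_1\cup\mathcal S_{>1}$''.

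The step I expect to be most delicate is the perturbation argument in the converse direction. To make it airtight I must verify that adding $\varepsilon\bm v_i$ and propagating the change upward keeps every entry non-negative and, crucially, does not spoil the solvability requirement $\bm b_j=\bm 0$ at any block with $\rho(\bm A_{jj})\geq 1$; this is exactly the place where the hypothesis that no such block is a predecessor of $i$ must be used, and where one has to be careful that the effect of $t_i$ cannot reach a $\rho\geq 1$ block \emph{indirectly} through the chain of $\rho<1$ blocks. Equivalently, the heart of the matter is to check that the parameters $t_i$ attached to the $\rho=1$ blocks are genuinely \emph{independent} coordinates on the affine solution set, so that leaving any one of them free really breaks uniqueness rather than being silently fixed by a constraint coming from higher up.
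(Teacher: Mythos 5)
Your plan is indeed the one the paper has in mind (it omits this proof, saying it is analogous to the closed case), and the bookkeeping in your first direction is fine as far as uniqueness goes: given a solution, back-substitution plus Lemma \ref{aux} shows that under (UO2) every Perron parameter $t_i$ is forced to $0$ and the $\rho<1$ blocks are then determined. However, the point you flagged at the end as ``delicate'' is not a pending verification: it is a genuine, unfixable gap, because the statement you are trying to prove is false. The reason is exactly the phenomenon you were worried about. If block $l$ is irreducible with $\rho(\bm A_{ll})<1$, then $(\bm I-\bm A_{ll})^{-1}$ is a \emph{positive} matrix, so any nonzero non-negative input entering block $l$ turns into a strictly positive $\bm x_l$ and is handed on to every direct predecessor of block $l$. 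Positivity therefore propagates along arbitrary chains of $\rho<1$ blocks, while (UO2)/(UO3) only constrain \emph{direct} edges; the correct conditions must be phrased in terms of reachability, contrary to the paper's explicit claim in Section \ref{section.pibt} that direct predecessors suffice.

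Both implications fail, at precisely the two places where your proof leans on the combinatorial condition. For (UO1)$\Rightarrow$(UO2) (your perturbation argument; read (UO2) with $j\neq i$, as must be intended, since $j=i$ satisfies that clause trivially), take
$$\bm A=\begin{bmatrix}2&1&0&0\\0&0&1&0\\0&0&1&0\\0&0&0&0\end{bmatrix},\qquad\bm d=\begin{bmatrix}0\\0\\0\\1\end{bmatrix},$$
with four irreducible $1\times 1$ blocks, $\rho(\bm A_{11})=2$, $\rho(\bm A_{22})=0$, $\rho(\bm A_{33})=1$, $\rho(\bm A_{44})=0$ and $J=\{4\}$. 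The solutions of the linear system are $[-t,t,t,1]^T$ with $t$ real, so the unique non-negative solution is $[0,0,0,1]^T$ and (UO1) holds; yet block $3$ has no direct predecessor with $\rho\geq 1$ (we have $\bm A_{13}=\bm 0$ and $\rho(\bm A_{22})<1$), so (UO2) fails. The parameter $t_3$ is killed not by a direct predecessor but by block $1$ acting \emph{through} block $2$: your perturbation $\bm x_3\mapsto\bm x_3+\varepsilon\bm v_3$, after re-solving block $2$, makes $\bm b_1\neq\bm 0$ with $\rho(\bm A_{11})>1$, so it produces no second solution. For (UO2)$\Rightarrow$(UO1), your first step cites Theorem \ref{open.existence}, but the equivalence (EO5)$\Leftrightarrow$(EO6) is false for the same reason: with
$$\bm A=\begin{bmatrix}2&1&0\\0&0&1\\0&0&0\end{bmatrix},\qquad\bm d=\begin{bmatrix}0\\0\\1\end{bmatrix},\qquad J=\{3\},$$
condition (EO6) holds (and so does all of (UO2), its third clause being vacuous), but the only solution of the system is $[-1,1,1]^T$, so no non-negative solution exists and (UO1) fails. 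The same defect sits inside the paper's own closed-case proof of (EC5)$\Leftrightarrow$(EC6) in Theorem \ref{closed.existence}: the vector constructed there need not satisfy the equations of the $\rho\geq 1$ blocks above the chosen block (change the last diagonal entry of the $3\times 3$ matrix above to $1$ and take $\bm d=\bm 0$: (EC6) holds with $i=3$, yet $\bm 0$ is the only non-negative solution). So the failure is not in your execution but in the theorem itself: a correct version must replace ``direct predecessor with $\rho\geq 1$'' by ``reachable from a component with $\rho\geq 1$'' (and similarly in Theorems \ref{closed.existence}, \ref{closed.uniqueness} and \ref{open.existence}), and your analysis, pushed to its conclusion, is what uncovers this.
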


\begin{remark} \label{rem.2} As a matter of example of how some direct implications can be derived from aspects concerning the connectivity between the vertices, let us suppose that, in the context of Notation ($\star$), one of the vertices, $S_r$, is a sink. In the context of Leontief's Input-Output Model, we can understand this situation as if this sector ``parasitizes'' the rest of the sectors in the economy but, maybe, it produces some good with an external demand. The out-degree of $S_r$ is 0 and so there is a row in $\bm A$ of the type $[0,\ldots, 0,\lambda,0,\ldots,0]$, where $\lambda$ is a positive number placed in the position $r$. Let us also consider the system $(\bm I-\bm A')\bm x'=\bm d'$ where $\bm A'$ is obtained from $\bm A$  by removing column $r$ and row $r$ and $\bm x',\bm d'$ are obtained from $\bm x,\bm d$ removing the entry in the position $r$. According to the previous theorems, we have the following situations:

\begin{itemize}

\item If $d_r> 0$, then necessarily $a_{rr}< 0$. In terms of Leontief's Input-Output Model, an external demand imposes the necessity of a (positive) production for the sector $S_r$.

\item If $d_r=0, a_{rr}\neq 1$, then $x_r=0$ is a similar fashion as this in Remark \ref{rem.1}.

\item If $d_r=0,a_{rr}=1$ this is a trivial case in which the sector $S_r$  ``parasitizes'' the rest of the sectors, it does not ``help'' them and there is no external demand of the goods produced by it. In this case, we need to study the modified problem 
$$(\bm I-\bm A')\bm x'=\bm d'+x_r\begin{bmatrix}a_{1r}\\\vdots\\a_{r-1,r}\\a_{r+1,r}\\\vdots \\a_{nr}\end{bmatrix}\qquad\text{where }\bm d+x_r\begin{bmatrix}a_{1r}\\\vdots\\a_{r-1,r}\\a_{r+1,r}\\\vdots \\a_{nr}\end{bmatrix}\neq \bm 0.$$

\item   $(\bm I-\bm A)\bm x=\bm d$ has an economically meaningful solution if and only if either $a_{rr}<1$ and $d_r> 0$ and $(\bm I-\bm A')\bm x'=\bm d'$ has an economically meaningful solution or $a_{rr}=1$, $\rho(\bm A')\leq 1$ and for every connected component in $\mathcal S_1$, this connected component is a closure and the corresponding $\bm d_i$ equals $\bm 0$. In particular, if $\rho(\bm A')<1$, then $\bm A$ is irreducible and either $\bm d'\neq 0$ or $S_r$ has at least one predecessor.

\item $(\bm I-\bm A)\bm x=\bm 0$ has a non-negative solution if and only if either $d_r=0$ and $(\bm I-\bm A')\bm x'=\bm d'$ has a non-negative solution (the sector $S_r$ is ignored) or $d_r>0, a_{rr}<1$ and the system
$$(\bm I-\bm A')\bm x'=\bm d'+x_r\begin{bmatrix}a_{1r}\\\vdots\\a_{r-1,r}\\a_{r+1,r}\\\vdots \\a_{nr}\end{bmatrix}$$

\noindent is compatible. In particular, in the last case, the vertices in $\mathcal S_1\cup\mathcal S_{>1}$ cannot reach $S_r$.

\item $\bm A\bm x=\bm 0$ has a unique non-negative solution if and only if $a_{rr}\neq 1$ and the system in the previous item has a unique non-negative solution. 
\end{itemize}

\noindent Similar simplifications of the problem can be done if we consider a distinguished closure instead of a sink.

\end{remark}

\section{Sensitivity analysis and some examples} \label{app}


Sensitivity analysis is important for determining the parameters of a system that are most critical and require more careful determination and to establish procedures that allow the solutions to be controlled. 
The predictions of input-output models depend on the accuracy of the parameters contained in the matrix of technical coefficients $\bm A$. Let us recall that, in general and in practice, the matrix of technical coefficients is determined from the transaction matrix $\bm M$ and the vector of external demand $\bm d$, which are obtained experimentally. So it seems advisable to perform sensitivity analysis to determine the effect of small changes in these parameters, since they may not be determined with complete certainty.

The sensitivity analysis of Leontief's Input-Output Model has been widely developed in the literature (see for instance 
\cite{A, BS, E}). As we have already explained, originally Leontief's Input-Output Model was used to describe the economics of a whole region in which we can find $n$ different industries each of them depending on the production of the rest.  This model is nowadays applied in a variety of contexts and, as for instance, for describing the situation of  interdependence concerning energy-related $CO_2$ emission between energy producing sectors and non-energy producing sectors. Sensitivity analysis in this last context has received a lot of recent attention (see \cite{TR,YZ}).

Let $z$ be a real variable that depends on a vector of variables $\bm \varepsilon=(\varepsilon_1,\ldots, \varepsilon_N)$ (that is, $z=z(\bm \varepsilon)$).  As used in Economy, let us recall that the \textbf{elasticity} of $z$ with respect to  $\varepsilon_i$ is defined as:
$$E_{z,\varepsilon_i}(\bm\varepsilon_0)=\frac{\varepsilon_i}{z(\bm\varepsilon_0)}\cdot \frac{\partial z}{\partial \varepsilon_i}(\bm\varepsilon_0)$$

\noindent and it is one of the possible measures of the sensitivity of $z$ with respect to each of the variables $\varepsilon_i$ near $\bm \varepsilon_0$.

The derivatives of variables with respect to parameters are not particularly meaningful from an economical perspective, as a measure of sensitivity, without significant context, and those depends significantly on the units chosen. In contrast, the elasticity has a clear meaning without context and it is dimensionless.


\noindent As far as we know, the most common approach in the literature for computing elasticities in the context of Leontief's Input-Output Model comes from the \emph{Numerical Linear Algebra viewpoint}: the parameters are perturbed with small increments and derivatives are not computed, that is, the following approximation is used (see Equation (12) in \cite{TR}):
$$E_{z,\varepsilon_i}(\bm\varepsilon)\approx\frac{\frac{\Delta z}{z}}{\frac{\Delta \varepsilon_i}{\varepsilon_i}}.$$

\noindent  In most of these works, also, to do so they need to study the sensitivity of the coefficients of \textbf{Leontief's Inverse} (that is, of the matrix $(\bm I-\bm A)^{-1}$) using the formulas appearing in \cite{SM}. Moreover, the articles cited above only deal with the open case of the model.

In the closed case it is not important to study the values of each entry of the solution as the parameters in $\bm A$ vary, but their relative size. So, it is reasonable to consider that the solutions of the closed model $(\bm I-\bm A)\bm x=\bm 0$ are always normalized $\|\bm x\|=1$. Uniqueness up to multiples correspond to uniqueness of normalized solutions and so, if the matrix of technical coefficients $\bm A$ satisfies the conditions in Theorem \ref{closed.uniqueness}, it makes sense to perform sensitivity analysis also in the closed case without differentiability issues derived from multiplicity of solutions (see \cite{BP} and the references therein).

In this section we determine the elasticities, as a measure of sensitivity, of the entries in the total production vector $\bm x$ (or of some related variables) with respect to the coefficients in $\bm A$ for both, the open economy model and the closed economy model (for which we consider normalized solutions).

\setcounter{rem}{2}

\begin{rem}\label{rem.rigor} Suppose that $\bm A=[a_{ij}]_{1\leq i,j\leq n}$. To perform the sensitivity analysis rigorously, we need to care about the two following questions.

\begin{itemize}

\item We need to ensure that the system has a unique solution (up to multiples in the closed case). The theorems required to do so have already been described in this article.

\item We need to ensure that the solution $\bm x$ is differentiable as a function of the vector of variables $(a_{11},\ldots, a_{1n},\ldots, a_{n1},\ldots, a_{nn})$. 

\end{itemize}


\end{rem}

In relation to the second item, we have the following theorem.

\begin{theo} \label{theo.nohomo} Let $\bm D(\bm\varepsilon)=[\gamma_{ij}(\bm\varepsilon)]_{1\leq i,j\leq n}$, $\bm d=[d_1(\bm\varepsilon),\ldots, d_n(\bm\varepsilon)]^T$ such that their entries are differentiable in a neighborhood $U$ of some $\bm\varepsilon_0\in\mathbb R^N$, $N\geq 1$. Suppose that
$$\forall \varepsilon\in U,\qquad\text{rank}(\bm D(\bm\varepsilon))=\text{rank}(\bm D(\bm\varepsilon)\mid \bm d(\bm\varepsilon))=r. $$

\begin{itemize}

\item[(a)] If $r=n-1$ and, for $\bm\varepsilon\in U$, $\bm d(\varepsilon)=\bm 0$, there exists some neighborhood  $U^*$ of $\bm\varepsilon_0$ such that  there exists differentiable functions $x_1(\bm\varepsilon),\ldots, x_m(\bm\varepsilon)$ such that $\bm x(\bm \varepsilon)=[x_1(\bm\varepsilon),\ldots,x_n(\bm\varepsilon)]^T$ satisfies $\bm D(\bm\varepsilon)\bm x(\bm\varepsilon)=\bm 0$ and $\|\bm x(\bm\varepsilon)\|=1$ for $\bm\varepsilon\in U^*$.

\item[(b)] If $r=n$ and, for $\bm\varepsilon\in U$, $\bm d(\varepsilon)\neq \bm 0$, there exists some neighborhood  $U^*$ of $\bm\varepsilon_0$ such that  there exists differentiable functions $x_1(\bm\varepsilon),\ldots, x_m(\bm\varepsilon)$ such that $\bm x(\bm \varepsilon)=[x_1(\bm\varepsilon),\ldots,x_n(\bm\varepsilon)]^T$ satisfies $\bm D(\bm\varepsilon)\bm x(\bm\varepsilon)=\bm d(\bm\varepsilon)$ and $\bm x(\bm\varepsilon)\neq \bm 0$ for $\bm\varepsilon\in U^*$.

\end{itemize}

\end{theo}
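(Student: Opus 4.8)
The plan is to prove both parts by writing down an \emph{explicit} differentiable solution built from the adjugate (classical adjoint) matrix $\operatorname{adj}(\bm D(\bm\varepsilon))$, rather than invoking an abstract implicit-function argument. The key structural fact I will exploit is that the entries of $\operatorname{adj}(\bm D(\bm\varepsilon))$ and $\det\bm D(\bm\varepsilon)$ are polynomials in the entries $\gamma_{ij}(\bm\varepsilon)$, and therefore inherit differentiability in $\bm\varepsilon$ from the hypotheses; the constant-rank assumption (equivalently, the Rouch\'e--Frobenius consistency condition $\text{rank}(\bm D)=\text{rank}(\bm D\mid\bm d)$) is what guarantees these expressions behave uniformly on a neighborhood.

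Part (b) is the straightforward case. Since $r=n$ and $\bm D(\bm\varepsilon)$ is square, the hypothesis forces $\bm D(\bm\varepsilon)$ to be invertible on all of $U$, so I would set $\bm x(\bm\varepsilon)=\bm D(\bm\varepsilon)^{-1}\bm d(\bm\varepsilon)=(\det\bm D(\bm\varepsilon))^{-1}\operatorname{adj}(\bm D(\bm\varepsilon))\bm d(\bm\varepsilon)$ by Cramer's rule. As a product and quotient of differentiable functions with nonvanishing denominator $\det\bm D(\bm\varepsilon)$, this $\bm x(\bm\varepsilon)$ is differentiable (one may take $U^*=U$), and $\bm x(\bm\varepsilon)\neq\bm 0$ is immediate because $\bm D(\bm\varepsilon)$ is invertible and $\bm d(\bm\varepsilon)\neq\bm 0$.

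Part (a) is where the real work lies. The crucial observation is that $r=n-1<n$ forces $\det\bm D(\bm\varepsilon)=0$ for \emph{every} $\bm\varepsilon\in U$, so the adjugate identity $\bm D(\bm\varepsilon)\operatorname{adj}(\bm D(\bm\varepsilon))=\det(\bm D(\bm\varepsilon))\bm I=\bm 0$ holds throughout $U$. Hence every column of $\operatorname{adj}(\bm D(\bm\varepsilon))$ lies in $\ker\bm D(\bm\varepsilon)$ for all $\bm\varepsilon$, not merely at $\bm\varepsilon_0$. Next, because $\text{rank}(\bm D(\bm\varepsilon_0))=n-1$, some $(n-1)\times(n-1)$ minor is nonzero at $\bm\varepsilon_0$, which means at least one column of $\operatorname{adj}(\bm D(\bm\varepsilon_0))$ is a nonzero vector; I would fix that column and call it $\bm w(\bm\varepsilon)$. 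It is differentiable in $\bm\varepsilon$ and nonzero at $\bm\varepsilon_0$, hence nonzero on some neighborhood $U^*$ by continuity, and it solves $\bm D(\bm\varepsilon)\bm w(\bm\varepsilon)=\bm 0$ there. Finally I would normalize by the (Euclidean) norm, setting $\bm x(\bm\varepsilon)=\bm w(\bm\varepsilon)/\|\bm w(\bm\varepsilon)\|$; since the Euclidean norm is differentiable away from the origin and $\bm w$ is nonvanishing on $U^*$, the resulting $\bm x(\bm\varepsilon)$ is differentiable, has unit norm, and satisfies $\bm D(\bm\varepsilon)\bm x(\bm\varepsilon)=\bm 0$.

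The main obstacle---and essentially the only delicate point---is ensuring that one \emph{single, fixed} choice of kernel vector is simultaneously a valid solution for all nearby $\bm\varepsilon$, differentiable, and nowhere vanishing on a neighborhood. A naive elimination approach (solving an invertible $(n-1)\times(n-1)$ subsystem and normalizing the free coordinate to $1$) would hand one differentiability but then require a separate check that the remaining equation is also satisfied for all $\bm\varepsilon$; the adjugate construction discharges all three requirements at once, precisely because $\det\bm D\equiv 0$ on $U$ turns every adjugate column into a global solution. The remaining ingredients---that adjugate entries are polynomial, hence differentiable, and that a rank-$(n-1)$ matrix has a nonvanishing adjugate column---are standard and require no more than the cofactor definition.
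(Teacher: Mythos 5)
Your proof is correct and takes essentially the same route as the paper: part (b) is Cramer's rule in both cases, and in part (a) the paper's vector $\bm y(\bm\varepsilon)=[\Gamma_{n1}(\bm\varepsilon),\ldots,\Gamma_{nn}(\bm\varepsilon)]^T$ of $n$-th-row cofactors is precisely a column of $\operatorname{adj}(\bm D(\bm\varepsilon))$, made nonvanishing by a WLOG choice of the nonzero $(n-1)\times(n-1)$ minor rather than by your direct selection of a nonzero adjugate column. The verification via the Laplace-expansion identity and the final normalization are identical.
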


\begin{proof} (a) There is no {loss of generality in} assuming that $\det[\gamma_{ij}(\bm\varepsilon_0)]_{1\leq i,j\leq r})$ is a non-trivial minor.   There is a neighborhood $U^*$ of $\bm\varepsilon_0$ contained in $U$ such that, for every $\bm\varepsilon\in U^*$, $\det([\gamma_{ij}(\bm\varepsilon_0)]_{1\leq i,j\leq r})\neq 0$. Consider the matrix
$$\bm{\widetilde D}(\bm\varepsilon)=\left[\begin{array}{l l l | l }\gamma_{11}(\bm\varepsilon) & \hdots & \gamma_{1,n-1}(\bm\varepsilon) & \gamma_{1,n}(\bm\varepsilon) \\ 
\vdots & & \vdots & \vdots  \\ 
\gamma_{n-1,1}(\bm\varepsilon) & \hdots & \gamma_{n-1,n-1}(\bm\varepsilon) & \gamma_{n-1,n}(\bm\varepsilon) \\ \hline 0 & \ldots & 0 & 0  \end{array}\right].$$

\noindent For $1\leq k\leq n$, let us denote by $\Gamma_{nk}(\bm\varepsilon)$ to the cofactor of the entry in the position $(n,k)$ in the previous matrix. Define $f_k(\bm\varepsilon=\Gamma_{nk}(\bm\varepsilon) $ and consider the vector $\bm y(\bm\varepsilon)=[f_1(\bm \varepsilon),\ldots, f_n(\bm \varepsilon)]^T$. Note that $\bm y(\bm \varepsilon)$ satisfies $\bm D(\bm\varepsilon)\bm y(\bm\varepsilon)=\bm 0$ and $\bm x(\bm\varepsilon)\neq \bm 0$ for $\bm\varepsilon\in U^*$.  To see this, note that 
$$[\gamma_{i,1}(\bm\varepsilon),\ldots, ,\gamma_{in}(\bm \varepsilon)]\begin{bmatrix}f_1(\bm\varepsilon)\\ \vdots \\ f_n(\bm \varepsilon) \end{bmatrix}=\sum_{k=1}^n\gamma_{ik}(\bm\varepsilon)\Gamma_{nk}(\bm \varepsilon)=\bm 0 $$

\noindent is the determinant of the $n\times n$ matrix obtained by replacing the $n$-th row in the original one by $[\gamma_{i,1}(\bm\varepsilon),\ldots, ,\gamma_{in}(\bm \varepsilon)]$.

\noindent Finally, define $\displaystyle{\bm x(\bm\varepsilon)=\frac{1}{\|\bm y(\bm \varepsilon)\|}\bm y(\bm \varepsilon)}$.

\noindent (b) This part is a direct consequence of Cramer's Rule.




\hspace{12cm}\end{proof}

Finally, we will compute the sensintivities, and consequently the elasticities, using the approach and methods developed in \cite{BP}, making it possible to compute elasticities in a direct way, without using the \emph{Numerical Linear Algebra viewpoint} and without performing the sensitivity analysis of Leontief's Inverse.

\subsection*{Closed Model} \label{subsection.closed}

As an example, we study the following simplified and academic problem appearing in the book by Leontief \cite{L}, Chapter 2. An input-output model depicting a three-sector economy is shown in the following table, which corresponds to the transaction matrix $\bm M$.  The sectors $S_1, S_2, S_3$ represent agriculture,  manufacture and households, respectively.


\begin{table}[h!]
\tbl{ }
{ \begin{tabular}{|l| l  l l | l|l|}
\hline      &$S_1$ &$S_2$&$S_3$  & Total Production ($\bm x_0$)& unit\\
\hline $S_1$ &  25& 20& 55& 100 &bushels of wheat\\
$S_2$& 14&  6& 30& 50 &yards of cloth\\
$S_3$& 80& 180& 40& 300 &man-years of labor \\
\hline 
 \end{tabular}
}

\end{table}



\noindent Let $\bm x_0=[100, 50, 300]^T$be the vector of total productions and let $\bm C$ be the diagonal matrix whose diagonal entries are $\frac{1}{100},\frac{1}{50},\frac{1}{300}$. Then the matrix of technical coefficients is
$$\bm A_0=\bm M\bm C=\begin{bmatrix}{25}/{100} & {20}/{50}& {55}/{300}\\ {14}/{100} & {6}/{50} & {30}/{300}\\ {80}/{100} & {180}/{50} & {40}/{300} \end{bmatrix}\approx\begin{bmatrix}0.25 & 0.4& 0.1833\\ 0.14 & 0.12 & 0.1\\0.8 & 3.6 & 0.1333 \end{bmatrix}.$$

\noindent See that the vector or total production $\bm x_0$ is an economically meaningful solution of the homogeneous system:
$${(\bm I-\bm A_0)}\bm x_0={\begin{bmatrix}{75}/{100} & -{20}/{50}& -{55}/{300}\\ -{14}/{100} & {44}/{50} & -{30}/{300}\\ -{80}/{100} & -{180}/{50} & {260}/{300} \end{bmatrix} }\begin{bmatrix}100\\ 50\\ 300 \end{bmatrix}=\begin{bmatrix} 0 \\ 0 \\ 0 \end{bmatrix}.$$

Consider the matrix $\bm A(\bm \varepsilon)=[a_{ij}]_{1\leq i,j\leq 3}$, whose entries are considered as variables, that is, $\bm\varepsilon=(a_{11},a_{12},a_{13},a_{21},\ldots, a_{23},a_{33})$ and let $\bm B(\bm \varepsilon)=(\bm I-\bm A(\bm\varepsilon))$. Let us consider the system $\bm B(\bm \varepsilon)\bm x(\bm \varepsilon)=\bm 0$, for $\bm x(\bm\varepsilon)=[x_1(\bm\varepsilon),x_2(\bm \varepsilon), x_3(\bm \varepsilon)]^T$. Let 
$$\bm\varepsilon_0=\left(\frac{25}{100}, \frac{20}{50}, \frac{55}{300}, \frac{14}{100}, \frac{6}{50},\frac{30}{300},\frac{80}{100}  \frac{180}{50}, \frac{40}{300}\right).$$

\noindent  Note that $\bm A(\bm\varepsilon_0)=\bm A_0$ and $\bm x(\bm\varepsilon_0)=\bm x_0$. We want to solve the following

\medskip

\noindent \textbf{Problem.} \emph{Compute the elasticities of the variables $x_1,x_2,x_3$ with respect to the technical coefficients $\bm \varepsilon=(a_{11},a_{12},\ldots,a_{23},a_{33})$ in a neighborhood of $\bm \varepsilon_0$.}

\medskip

As explained in Remark \ref{rem.rigor}, to perform sensitivity analysis of this problem we need to check two things. The first one is to ensure that the system has a unique non-negative solution up to multiples. This is true, since the matrix $\bm A$ is irreducible in a neighborhood of $\bm \varepsilon_0$ (Theorem \ref{closed.uniqueness}). The second one is to ensure that the entries in $\bm x$ are differentiable with respect to the vector of variables $\bm\varepsilon=(a_{11},a_{12},a_{13},\ldots, a_{31},a_{32},a_{33})$ (this is also true since the entries in $\bm B(\bm \varepsilon)$ are differentiable with respect to the vector of variables $\bm \varepsilon$ and the previous condition holds, as ensured by Theorem \ref{theo.nohomo}).


Now, to solve the problem, we need to compute the partial derivatives $\frac{\partial \bm x}{\partial a_{ij}}(\bm\varepsilon_0)$. We can use a modification of Nelson's Method (\cite{N}) as appears in the following table.  Nelson's Method is one of the most efficient algorithms for the sensitivities computation with respect to matrix parameters of eigenvectors associated to simple eigenvalues, and in \cite{BP} we propose an adaptation of Nelson's method to sensitivities of linear systems solutions, which is suitable for sensitivities computation in the closed model case (more comments about this are made in Subsection 4.1 of \cite{BP}).

\medskip

\begin{table}[h!]
\tbl{ }
{
\begin{tabular}{|c l|}\hline & \textbf{Direct Method}\\ \hline 1.- & We look for a particular solution $\bm v$ of the linear system \\
& $\displaystyle{(\bm I-\bm A_0)\cdot \bm v=x_j(\bm \varepsilon_0)\cdot \bm e_i}$ \\
& where $\bm e_j$ denotes the vector which entries are 1 in the position $j$ and 0 in the rest.\\
 2.- & Set $c=-\bm v^T\bm x_0$. \\ 3.- & Then $\displaystyle{\frac{\partial \bm x}{\partial a_{ij}}(\bm \varepsilon_0)=\bm v+c\bm x_0}$.\\ \hline

\end{tabular}
}
\end{table}

\medskip

\noindent Let us remark that in Subsection 4.2 in \cite{BP} we can also find an adjoint method which is more efficient for large values of $n$.

\medskip

  We obtain the following Jacobian matrix ($\frac{\partial x_m}{\partial a_{ij}}(\bm\varepsilon_0)$ is the entry $m,3(i-1)+j$):
$$\begin{bmatrix}   95.0681  & 47.5341  &285.2043 & -34.4990 & -17.2495 &-103.4970 & -17.6412  & -8.8206 & -52.9236\\
  -25.8655 & -12.9328 & -77.5965  & 14.4834   & 7.2417  & 43.4502  &-20.6859&  -10.3430  &-62.0578\\
  -27.3785 & -13.6892 & -82.1354&    9.0858 &   4.5429  & 27.2573&    9.3281&    4.6640&   27.9842\end{bmatrix}$$

\noindent and then the following matrix containing the elasticities ($E_{x_m,a_{ij}}(\bm\varepsilon_0)$ is the entry $m,3(i-1)+j$):
$$\begin{bmatrix} 0.2377 &   0.1901  &  0.5229 &  -0.0483  & -0.0207 &  -0.1035  & -0.1411  & -0.3175 &  -0.0706\\
   -0.1293 &  -0.1035 &  -0.2845 &   0.0406 &   0.0174   & 0.0869 &  -0.3310  & -0.7447  & -0.1655\\
   -0.0228  & -0.0183 &  -0.0502 &   0.0042  &  0.0018 &   0.0091  &  0.0249  &  0.0560  &  0.0124\end{bmatrix}.$$

Note that the greatest values are, by far, the entries in the positions $(1,3)$ and $(2,8)$ in the matrix above, corresponding to the total production of agriculture with respect to $a_{13}$ (number of bushels of wheat required per one man-year of labor) and the total production of manufacture with respect to $a_{32}$ 
(man-years of labor required to produce one yard of cloth).

\subsection*{Open Model (first example)}

In \cite{TR} the authors perform the sensitivity analysis of a problem concerning energy-related $CO_2$ emission. In the following, we will carry out the sensitivity  analysis for the same problem, but using our own approach. We would like to remark again that in the literature (including \cite{TR}) the sensitivity analysis is traditionally done in a less direct way (studying the sensitivity of the coefficients of Leontief's Inverse).

\medskip


Let us consider an economic system with four economic sectors: $S_1,S_2$ are two energy producing sectors and $S_3,S_4$ are two non-energy sectors. Each productive sector consumes energy that is generated by an energy sector. The production relationships between them are captured in the following table (corresponding to the transaction matrix $\bm M$).

\begin{table}[h!]
\tbl{ }
{
 \begin{tabular}{|l| l l l l |l | l|}
\hline      &$S_1$ &$S_2$ &$S_3$ &$S_4$ & $\bm d$ & $\bm x_0$\\
\hline $S_1$& 174& 255& 347& 44& 50&870\\
$S_2$& 87& 102& 139& 132& 50&510\\
$S_3$& 87& 51& 70& 88& 400 &696\\
$S_4$& 87&51 &70 &132 &100 &440\\
\hline 
 \end{tabular}}
\end{table}

\noindent The rows in the previous table show the sales of the sectors, while the columns show their purchases, in million euros. Vector $\bm d$ contains the final demands of each sector and vector $\bm x_0$ the total sales (including demand). Let $\bm z_0=[z_1,z_2]^T$ be the vector containing the quantity of $CO_2$ emission by the energy producing sectors $S_1$ and $S_2$. Let us assume that the quantity of $CO_2$ emission generated by each sector can be decomposed into the following two factors:
$$z_m=c_m\cdot x_m,\qquad \text{for }m=1,2, $$

\noindent where $c_m$ represents the \textbf{intensity coefficient} (emission per unit of output of sector $i$). For this problem, let us take $c_1=1,c_2=5$. The matrix of technical coefficients is
$$\bm A_0=\begin{bmatrix}0.2 & 0.5& 0.5 & 0.1\\ 0.1 & 0.2 & 0.2 & 0.3\\ 0.1 & 0.1 & 0.1 & 0.2\\ 0.1 & 0.1 & 0.1 & 0.3 \end{bmatrix}. $$

Let us consider, in a similar way to the one followed in the previous subsection, that the entries in the matrix $\bm A(\bm \varepsilon)=[a_{ij}]_{1\leq i,j\leq 4}$ are variables, that is, $\bm \varepsilon=(a_{11},a_{12},\ldots, a_{43},a_{44})$. We consider that the entries in the vector $\bm d$ and the values $c_1,c_2$ has been determined with precision and we let them out of the analysis (they could also be included effortlessly as it is explained in the following example).  Let $\bm B(\bm \varepsilon)=(\bm I-\bm A(\bm \varepsilon))$. Let us consider the system $\bm B(\bm \varepsilon)\bm x(\bm \varepsilon)=\bm d$ and the variables $z_1(\bm \varepsilon)=c_1\cdot x_1(\bm \varepsilon),z_2(\bm \varepsilon)=c_2\cdot x_2(\bm \varepsilon)$, where $x_j(\bm \varepsilon)$, for $j=1,\ldots, 4$, denotes the corresponding entry in $\bm x(\bm \varepsilon)$. For
$$\bm \varepsilon_0=(0.2 , 0.5, 0.5 , 0.1, 0.1 , 0.2 , 0.2 , 0.3,0.1 , 0.1 , 0.1 , 0.2, 0.1 , 0.1 , 0.1 , 0.3)$$

\noindent  we have, again, that $\bm A(\bm \varepsilon_0)=\bm A_0$ and $\bm x(\varepsilon_0)=\bm x_0$. This time we are interested in the following

\medskip

\noindent \textbf{Problem.} \emph{Compute the elasticity of the variables $z_1, z_2$ with respect to the technical coefficients  $\bm \varepsilon=(a_{11},a_{12},\ldots,a_{43},a_{44})$ in a neighborhood of $\bm \varepsilon_0$.}

\medskip

As explained in Remark \ref{rem.rigor} and happened in the previous subsection, to perform sensitivity analysis of this problem we need to ensure that the system has a unique non-negative solution (which is again true, according to Theorem \ref{open.uniqueness}, since the matrix $\bm A$ is irreducible) and to ensure that the entries in $\bm x$ are differentiable with respect to the vector of variables $\bm \varepsilon=(a_{11},a_{12},a_{13},\ldots, a_{31},a_{32},a_{33})$ (which is true according to Theorem \ref{theo.nohomo}).

In this case, we can just compute the partial derivatives as follows (it can be proved straightforward, but the reader may find a larger discussion in Lemma 18 in \cite{BP}):
$$\bm B(\bm \varepsilon_0)\frac{\partial \bm x}{\partial a_{ij}}(\bm \varepsilon_0)=x_j(\bm \varepsilon_0)\cdot \bm e_{i} $$

\noindent  where $\bm e_i$ denotes the vector which entries are 1 in the position $j$ and $0$ in the rest of them. So we can easily obtain the Jacobian matrix to our problem. We represent its transpose below ($\frac{\partial z_m}{a_{ij}}(\partial\bm\varepsilon_0)$ correspond to the entry $3(i-1)+j,m$):
$$\begin{bmatrix}   1.5432  &  0.3823    \\
    0.9046  &  0.2241    \\
    1.2345   & 0.3059    \\
    0.7804    &0.1934    \\
    1.2623    &1.5189    \\
    0.7400    &0.8904    \\
    1.0099    &1.2152    \\
    0.6384    &0.7682    \\
    1.2611    &0.6488    \\
    0.7393    &0.3804    \\
    1.0089    &0.5191    \\
    0.6378    &0.3282    \\
    1.1218    &0.8910    \\
    0.6576    &0.5223  \\
    0.8974    &0.7128 \\
    0.5673    &0.4506\end{bmatrix}.$$

Let us represent the values of the elasticities ($E_{e_1,a_{ij}}$, $E_{e_2,a_{ij}}$) in a table, instead of a matrix, to make it easier to compare with the approximate values ($\varepsilon_{e_{e1}a_{ij}}$,  $\varepsilon_{e_{e2}a_{ij}}$) appearing in Table 5 in \cite{TR}. We have obtained approximately the same values for the elasticity that in that article.

\begin{table}[h!]
\tbl{ }
{
 \begin{tabular}{|l l | l l|l l |}
\hline i & j &$E_{e_1,a_{ij}}$ & $E_{e_2,a_{ij}}$& $\varepsilon_{e_{e1}a_{ij}}$ &  $\varepsilon_{e_{e2}a_{ij}}$ \\
\hline 1&1 & 0.3547 & 0.1499 &0.3561 &  0.1506\\ 
1& 2&  0.5199  &0.2197&0.5207 &  0.2202\\
1&3 &0.7075 &0.2990& 0.7096&  0.3002\\
1&4  &0.0897 &0.0379&0.0896 &0.0379\\
2&1  & 0.1451 &0.2978&0.1454 &  0.2986\\
2& 2 & 0.1701 &0.3492&0.1706  &0.3504 \\
2&3  & 0.2318 &0.4758&0.2321  &0.4766 \\
2& 4 & 0.2201&0.4519&0.2201  &0.4520 \\
3&1  &0.1450 &0.1272&0.1454  &0.1276 \\
3&2  &0.0850 &0.0746&0.0851 & 0.0747\\
3&3  &0.1166 &0.1024& 0.1161 &0.1019 \\
3&4  & 0.1466 &0.1287&0.1467  &0.1287\\
4&1  &0.1289 &0.1747&0.1292  &0.1751 \\
4&2  & 0.0756&0.1024&0.0756  &0.1025 \\
 4& 3 &0.1037 & 0.1406&0.1031 & 0.1398\\
4& 4& 0.1956&0.2651 &0.1964  &0.2662 \\ 
\hline
 \end{tabular}}
\end{table}

\subsection*{Open Model (second example)}

Let us take, for this example, the $65\times 65$ matrix of technical coefficients $\bm A_0$ appearing in \cite{INE} and let us consider the vector (or vectors) $\bm x_0$ satisfying
$$(\bm I-\bm A_0)\bm x_0=\bm d_0, $$

\noindent where $\bm d_0$ is the column containing the outputs at basic prices. Sectors 44a (imputed rents of owner-occupied dwellings), 47 (scientific research and development services), 63 (services of households as employers: undifferentiated goods and services produced by households for own use) and 64 (services provided by extraterritorial organizations and bodies) are special. Using the pertinent permutation matrix $\bm P$, we reduce our system to one of the type:
$$\underbrace{\left[\begin{array}{l | l l l l}\bm I-\bm A_{11} &-\bm A_{1,44a} & -\bm A_{1,47}& -\bm A_{1,63}& -\bm A_{1,64}\\ \hline  \bm 0& 1& 0& 0& 0\\ \bm 0& 0& 0.97673& 0& 0\\ \bm 0& 0& 0& 1&0\\ \bm 0& 0& 0& 0&1 \end{array}\right]}_{\bm I-\bm P\bm A_0\bm P^{-1}}\begin{bmatrix}\widetilde{\bm x}_1\\ \hline x_{44a}\\ x_{47}\\ x_{63} \\ x_{64} \end{bmatrix}=\begin{bmatrix}\widetilde{\bm d}_1\\ \hline  90119\\ 16931.2 \\ 9761 \\ 0 \end{bmatrix}. $$

\noindent Let us remark that, in this example, the blocks $\bm A_{1,44a},\bm A_{1,47},\bm A_{1,63}, \bm A_{1,64}$ are not $\bm 0$. $\bm A_{11}$ is an irreducible block and so are the $1\times 1$ blocks $\bm A_{44a,44a}=[0]$, $\bm A_{47,47}=[0.02327]$, $\bm A_{63,63}=[0]$, $\bm A_{64,64}=[0]$.

In a similar fashion to the one in the previous example, let us consider that the vector $\bm \varepsilon$ of length $65^2+65$ that represent the entries in the matrix of technical coefficients and in the external demand vector.  Let $\bm \varepsilon_0$ be the value of the variables corresponding to $\bm A_0,\bm d_0$. Let $\widetilde{\bm B}(\bm \varepsilon)=(\bm I-\bm P\bm A(\bm \varepsilon)\bm P^{-1})$ and $\widetilde{\bm d}(\bm\varepsilon)=\bm P\bm d(\bm\varepsilon)$. Let us consider the system $\widetilde{\bm B}(\bm \varepsilon)\widetilde{\bm x}(\bm \varepsilon)=\widetilde{\bm d}(\bm\varepsilon)$. We are interested in the following:

\medskip

\noindent \textbf{Problem.} \emph{Compute the elasticity of the terms in $\widetilde{\bm x}(\bm\varepsilon)$ with respect to the elements in $\bm \varepsilon$.}

\medskip

As we have explained, to perform sensitivity analysis of this problem we need to ensure that the system has a unique non-negative solution (which is again true, according to Theorem \ref{open.uniqueness}) and to ensure that the entries in $\widetilde{\bm x}$ are differentiable with respect to the vector of variables $\bm \varepsilon$ (which is true, according to Theorem \ref{theo.nohomo}). We have that, for any variable $\varepsilon_k$ in the vector $\bm\varepsilon$:
\begin{equation} \label{eq.derivada}\widetilde{\bm B}(\bm \varepsilon_0)\frac{\partial \widetilde{\bm x}}{\partial \varepsilon_i}(\bm \varepsilon_0)=\begin{cases}\widetilde x_n(\bm \varepsilon_0)\cdot \bm e_m&\text{if $\varepsilon_i$ corresponds to the position $(m,n)$}\\
&\text{in the matrix matrix of tech. coeff.}\\
 \displaystyle{\bm e_m} & \text{if $\varepsilon_i$ corresponds to the position $m$ in the}\\
&\text{ final demand vector,} \end{cases} \end{equation}

\noindent where $\bm e_m$ denotes the vector which entries are 1 in the position $m$ and 0 otherwise and $\widetilde x_n(\bm \varepsilon_0)$ denotes the entry in the position $n$ of $\widetilde{\bm x}(\bm \varepsilon_0)$.

 Note that, in this case, it is very easy to compute the derivative $\displaystyle{\frac{\partial \widetilde x_n}{\partial a_{ij}}(\varepsilon_0)}$ for $n=44a, 47, 63, 64$. This is part of a more general principle:

\begin{rem} Let us consider the general reducible case:
$$\left[\begin{array}{c|c|c} \bm I-\bm A_{11}(\bm \varepsilon_0) &\hdots & \bm -\bm A_{1k}(\bm\varepsilon_0)\\ \hline \bm 0& \ddots & \vdots \\ \hline \bm 0& \bm 0&\bm I-\bm A_{kk}(\bm\varepsilon_0)\end{array}\right]\begin{bmatrix}\widetilde{\bm x}_1(\bm\varepsilon_0)\\ \hline \vdots \\ \hline \widetilde{\bm x}_k(\bm \varepsilon_0) \end{bmatrix}=\begin{bmatrix}\widetilde{\bm d}_1(\bm \varepsilon_0)\\ \hline \vdots \\ \hline  \widetilde{\bm d}_k(\bm \varepsilon_0) \end{bmatrix}.$$

\noindent Note that, for any index $i$ corresponding the block $\bm A_{jj}$, the entry in the position $i$ in $\widetilde{\bm x}(\varepsilon_0)$, $\widetilde{x}_i(\bm\varepsilon_0)$, depends only on the entries in
$$\left[\begin{array}{c|c|c} \bm I-\bm A_{jj}(\bm \varepsilon_0) &\hdots & \bm -\bm A_{jk}(\bm\varepsilon_0)\\ \hline \bm 0& \ddots & \vdots \\ \hline \bm 0& \bm 0&\bm I-\bm A_{kk}(\bm\varepsilon_0)\end{array}\right],\qquad\begin{bmatrix}\widetilde{\bm d}_j(\bm \varepsilon_0)\\ \hline \vdots \\ \hline  \widetilde{\bm d}_k(\bm \varepsilon_0) \end{bmatrix}.$$

\noindent As a consequence, so does the value of $\displaystyle{\frac{\partial \widetilde x_i}{\partial \epsilon_m}}$. Moreover, as a consequence of Equation \eqref{eq.derivada}, if $m$ corresponds to a position in either $\bm A_{rr}$ or $\widetilde{\bm d}_r$ the value of $\displaystyle{\frac{\partial \widetilde x_i}{\partial \varepsilon_m}}$ is 0 if $r<j$ and only depends on
$$\left[\begin{array}{c|c|c} \bm I-\bm A_{jj}(\bm \varepsilon_0) &\hdots & \bm -\bm A_{jr}(\bm\varepsilon_0)\\ \hline \bm 0& \ddots & \vdots \\ \hline \bm 0& \bm 0&\bm I-\bm A_{rr}(\bm\varepsilon_0)\end{array}\right],\qquad\begin{bmatrix}\widetilde{\bm d}_j(\bm \varepsilon_0)\\ \hline \vdots \\ \hline  \widetilde{\bm d}_r(\bm \varepsilon_0) \end{bmatrix}.$$

\noindent otherwise.

\end{rem}

\section*{Final Comments}

The study that it is done here for the system $(\bm I-\bm A)\bm x=\bm d$ is suitable to be generalized for systems of the type $(\lambda \bm I-\bm A)\bm x=\bm d$ for a fixed $\lambda$, corresponding to contracting or expanding economies. See \cite{Al} for more information concerning the model and \cite{G} for some results in this direction.



\section*{Disclosure statement}

The authors report there are no competing interests to declare.


\section*{Funding}

This work has been supported by the Spanish Agencia Estatal de Investigaci\'on thorugh project PID2020-116207GB-I00 and Junta de Comunidades de Castilla-La Mancha through project SBPLY/19/180501/000110.

\end{document}